\newtheorem{thm}{Theorem}[section]
\newtheorem{cor}[thm]{Corollary}
\newtheorem{lemma}[thm]{Lemma}
\newtheorem{prop}[thm]{Proposition}
\newtheorem{defn}[thm]{Definition}
\newtheorem{rmk}[thm]{Remark}
\newtheorem*{rmk*}{Remark}
\title[$n$-bridge braids and the braid index]{$n$-bridge braids and the braid index}
\author[Gollero]{Dane Gollero}
\address{Department of Mathematics, University of Utah}
\email{3rundane@gmail.com}
\author[Krishna]{Siddhi Krishna}
\address{Department of Mathematics, Columbia University}
\email{siddhi.krishna@bc.edu}
\author[Loving]{Marissa Loving}
\address{Department of Mathematics, University of Wisconsin - Madison}
\email{mloving2@wisc.edu}
\author[Neri]{\\Viridiana Neri}
\address{Department of Mathematics, Columbia University}
\email{vjn2108@columbia.edu}
\author[Tahir]{Izah Tahir}
\address{School of Mathematics, Georgia Institute of Technology}
\email{itahir3@gatech.edu}
\author[White]{Len White}
\address{Department of Mathematics, Portland State University}
\email{len\textunderscore white@live.com}
\definecolor{darkpurple}{rgb}{.5,0,.5}
\definecolor{darkteal}{rgb}{0,.5,.5}
\definecolor{darkgreen}{rgb}{.15,.5,.35}
\begin{document}

\maketitle

\begin{abstract}
    In this work, we find a closed form formula for the braid index of an $n$-bridge braid, a class of positive braid knots which simultaneously generalizes torus knots, 1-bridge braids, and twisted torus knots. Our proof is elementary, effective, and self-contained, and partially recovers work of Birman--Kofman. Along the way, we show that the disparate definitions of twisted torus knots in the literature agree. 
\end{abstract}

\section{Introduction} \label{section:Introduction}
\subsection{Motivation and Summary}

Knots and links play an important role in low-dimensional topology. One simple way to measure the complexity of a link $L$ in $S^3$ is the \textit{braid index}, $i(L)$, which is the minimum number of strands required to represent $L$ as the closure of a braid on as many strands. As every link is realized as the closure of some braid \cite{Alexander}, the braid index is a well defined link invariant. Even for knots, the braid index is often quite difficult to compute. The simplest infinite family for which the braid index is computed are the $T(p,q)$ torus knots for which $i(T(p,q))=\min\{p,q\}$. Analogous formulas in the literature are rare.

It is natural to hope that generalizations of torus knots lend themselves to closed braid index formulas. 
One axis along which we can generalize comes from the Dehn surgery perspective. Dehn surgery is a powerful operation within 3--manifold topology: every 3--manifold is obtained by Dehn surgery along some link in $S^3$ \cite{Lickorish:DehnSurgery, Wallace:DehnSurgery}. Despite the ubiquity of this technique, some of the most basic questions about Dehn surgery remain open. For example, the infamous \textit{Berge Conjecture} predicts exactly which knots in $S^3$ admit a Dehn surgery to \textit{lens spaces}, the rational homology 3--spheres admitting genus--1 Heegaard splittings \cite{Berge}. Moser \cite{Moser:TorusKnots} showed that torus knots always admit Dehn surgeries to lens spaces. Lens spaces are examples of \textit{L-spaces}: the closed, connected, oriented 3-manifolds with ``small'' Heegaard Floer homology \cite{OSz:LensSpaceSurgeries}. It immediately follows that torus knots are examples of knots admitting a Dehn surgery to L-spaces. Thus, one way to generalize torus knots would be to identify other knots which \textit{also} admit Dehn surgeries to L-spaces. 

Perhaps surprisingly, there are infinitely many hyperbolic knots which admits surgeries to lens spaces: the first examples were identified by Fintushel and Stern a decade after Moser's work \cite{FintushelStern}. A decade later still, work of Berge and Gabai showed that an infinite sub-family of \textit{1-bridge braids} admit a Dehn surgery to a lens space \cite{Berge:SolidTorus, Gabai:SolidTori, Gabai:1BridgeBraids} (a precise definition of these knots appears later in this paper). In fact, all 1-bridge braids admit a Dehn surgery to L-spaces \cite{GLV:11LSpace}. Therefore, we see that 1-bridge braids are a generalization of torus knots from the Dehn surgery perspective -- moreover, they are a natural extension from a braid-theoretic point of view as well (see \Cref{section:background} for more details). Besides 1-bridge braids, there are other braid theoretic ways to generalize torus knots, including \textit{$n$-bridge braids} \cite{Gabai:1BridgeBraids}, \textit{twisted torus knots} \cite{BirmanKofman, Vafaee:TwistedTorusKnots, LeeVafaee}, and \textit{T-links} \cite{BirmanKofman}. \Cref{section:background} contains the definitions of these various families, and the relationships between them.

Braid theoretic definitions are valuable, in part, because they are explicit and concrete -- however, it can be remarkably difficult to determine whether different braid theoretic definitions coincide. For example, twisted torus knots have received a lot of attention over the past few years \cite{BirmanKofman, ChampanerkarFuterKofmanNeumannPurcell, Vafaee:TwistedTorusKnots, LeeVafaee, KrishnaMorton, DePaiva}, yet there are multiple different braid theoretic definitions of twisted torus knots scattered throughout the literature. In this paper, on route to proving our main result, we prove that these various definitions of twisted torus knots coincide; see \Cref{section:LorenzKnots}.

As mentioned above, $n$-bridge braids (which we define in \Cref{section:background}) are one natural generalization of twisted torus knots from a braid theoretic standpoint. Torus knots are defined by using two parameters; in contrast, $n$-bridge braids are defined using four parameters. In this work, we compute the braid index of any $n$-bridge braid.

\begin{thm} \label{thm:NBridge}
The braid index of an $n$-bridge braid, $\mathcal{K}(w,b,t,n)$, is determined by the defining parameters; namely,
\begin{center}
$ i(\mathcal{K}(w,b,t,n)) = \begin{cases} 
      w &  t \geq w, \ n\geq 1 \\
      t &  w > t > b, \ n \geq 1 \\
      t+1 & w > b \geq t, n=1\\
      b+1 & w > b\geq t, n+t \geq b+1, n>1\\
      n+t & w > b \geq t, n+t < b+1, n>1.
      \end{cases}$
\end{center}
\end{thm}

\noindent As an immediate consequence, we determine the braid index of a 1-bridge braid:

\begin{cor} \label{thm:BraidIndex}
The braid index of a 1--bridge braid $\mathcal{K}(w, b,t)$ is:
\begin{center}
$ i(\mathcal{K}(w,b,t)) = \begin{cases} 
      w & t \geq w \\
      t &  w > t > b \\
      t+1 & b \geq t.
      \end{cases}$
\end{center}
\end{cor}

The main proof strategy for \Cref{thm:NBridge}, and \Cref{thm:BraidIndex} is elementary: we use the well-known \textit{Markov moves} to manipulate the presentation of the braid, and then apply a result of Morton and Franks--Williams \cite{Morton:PolysFromBraids, Morton:KnotPolys, FranksWilliams}. Their theorem says that if a positive braid $\beta$ on $k$ strands contains a \textit{positive full twist}, then in fact, $i(\beta) = k$. Our proof is completely effective: we concretely apply Markov moves to produce an explicit positive braid which contains a full twist; we then apply the Morton---Franks--Williams result to this braid to know the braid index.

\Cref{thm:NBridge} partially recovers -- using very different techniques -- a result of Birman--Kofman \cite{BirmanKofman}. In \cite{BirmanKofman}, the authors define \textit{T-links} (these links are the closures of particular positive braids), and prove that the set of T-links coincides with the well studied \textit{Lorenz links}, i.e. the set of links which can be embedded onto the ``Lorenz template'', which is seen in \Cref{fig:Lorenz}. Lorenz links are interesting in their own right as they exhibit rich dynamical and geometric properties  \cite{BirmanWilliams, BirmanKofman, ChampanerkarFuterKofmanNeumannPurcell, Birman:Lorenz, Dehornoy:Lorenz, dePaivaPurcell}. 
Notably, Birman--Kofman show that over half of the ``simplest" hyperbolic knots are Lorenz knots \cite{BirmanKofman}. 

\begin{figure}[htb]\center
\includegraphics[scale=0.5]{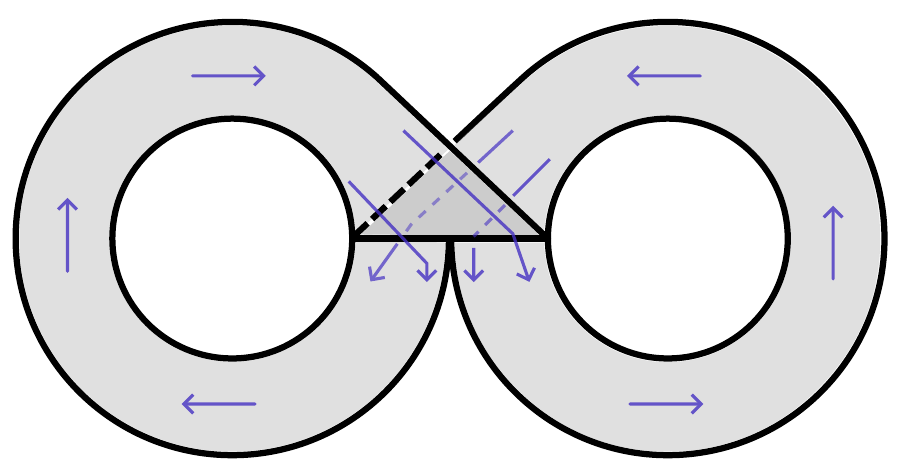}
\caption{In this figure, we see the \textit{Lorenz template}, which is a 2-complex with some extra dynamical information. The arrows on the template dictate how a simple curve (or collection thereof) should flow around the surface. For example, a curve can flow from the left to the right by passing in the ``front'' of the branch locus, and it can flow from the right to the left by passing ``behind'' the branch locus. Simple closed curves that can be embedded on the Lorenz template are called \textit{Lorenz links}.}
\label{fig:Lorenz}
\end{figure}

We coarsely summarize the Birman--Kofman strategy for computing the braid index for T-links and then contrast it with the methods used in this paper. Birman--Williams \cite{BirmanWilliams} proved that Lorenz knots can always be realized as the closures of positive braids which contain a positive full twist -- therefore, one can apply the Morton---Franks--Williams theorem to determine the braid index. So, Birman--Kofman first prove that T-links coincide with Lorenz knots, and then adapt the T-link presentation to a Lorenz presentation; applying Birman--Williams yields the final result. In contrast to their combinatorial and dynamical proof, our proof is self-contained, elementary, and explicit, as we bypass the Lorenz template and only utilize Markov moves. Moreover, unlike Birman--Kofman, our proof \textit{produces an explicit braid} which is Markov equivalent to an $n$-bridge braid. This itself has value, and was utilized by Krishna--Morton to study 4-dimensional properties of Lorenz knots \cite{KrishnaMorton}; next, we briefly describe some of their work, and the ties to this paper.

Recently, Krishna--Morton  showed that if a knot $K$ can be realized as the closure of a positive braid with a full twist, then the braid index of $K$ appears as the third exponent in the Alexander polynomial for $K$ \cite[Theorem 1.2]{KrishnaMorton}. This already yields applications for 1-bridge braids: in the proof of \Cref{thm:BraidIndex}, we show that 1-bridge braids can be realized as the closure of a positive braid with a full twist and thus, by \cite[Theorem 1.2]{KrishnaMorton}, the third exponent of the Alexander polynomial for a 1-bridge braid can be determined directly from the braid index formula in \Cref{thm:BraidIndex}. (We note that, in general, it is very hard to determine non-trivial terms in the Alexander polynomial of a positive braid knot.) Prior to our work, if one wanted to compute the braid index of a 1-bridge braid, one would have to do the following: (1) show that a 1-bridge braid is a T-link (from Gabai's definition of 1-bridge braids, and Birman--Kofman's definition of T-links, this is not clear), and then (2) apply Birman--Kofman (and Birman--Williams) to determine the braid index.  

Therefore, in addition to identifying a closed formula for the braid index, our paper accomplishes a few important goals: it unifies multiple viewpoints and definitions in the literature, and it is elementary and effective (and could be implemented by a computer for more complicated links). Perhaps most importantly, it produces an \textit{explicit} positive braid word to which the Morton---Franks--Williams theorem applies.

\subsection{Outline of the paper} In \Cref{section:background} we outline the definitions and foundational results that we will use throughout the paper and set some notational conventions for the remainder of the paper. In \Cref{section:LorenzKnots}, we prove that the different definitions of twisted torus knots in the literature agree, and also show that $n$-bridge braids (as we defined them) are Lorenz knots. In \Cref{section:preliminaries}, we establish a series of lemmas and propositions to be used in the proof of \Cref{thm:NBridge}. The proof of \Cref{thm:NBridge} is contained in \Cref{section:n-bridge}.

\subsection{Acknowledgements} This work began as part of the 2021 iteration of the Georgia Tech School of Math's \textit{Research Experience for Undergraduates} program. We gratefully acknowledge support from NSF grants DMS-1552285 (SK, ML), DMS-1745583 (DG, SK, VN, IT, LW), DMS-2103325 (SK), and DMS-1902729 (ML). We also thank Kyle Hayden for helpful conversations, and Zipei Nie for informing us that there is overlap between the preliminary lemmas in \Cref{section:preliminaries} and \cite{Nie:1BBsSatellites}. See \Cref{section:preliminaries} for further details. Finally, we sincerely thank the referee for their detailed and thoughtful comments.

\section{Background} \label{section:background}

We begin with some preliminaries. 

\begin{defn} The \textbf{braid group on $n$ strands}, denoted $B_n$, is the group with the following presentation:
\begin{align*}
B_n : = \langle \sigma_1, \sigma_2, \ldots, \sigma_{n-1} \ | \ 
\mathcal{R} \ \rangle
\end{align*}

where $\mathcal{R}$ denotes the following set of braid relations:
    \begin{enumerate}
        \item $\sigma_i \sigma_j = \sigma_j \sigma_i$ if $|i - j| > 1$
        \item $\sigma_i \sigma_{i+1} \sigma_i = \sigma_{i+1} \sigma_i \sigma_{i+1}$ where $1 \leq i \leq n-2$.
    \end{enumerate}
\end{defn}

This is also known as \textit{Artin's presentation for the braid group}, and the generating set $\sigma_1, \ldots, \sigma_{n-1}$ are typically referred to as the \textit{Artin generators} for the braid group. There are other group presentations for the braid group. The interested reader can consult \cite{BirmanBrendle} for a survey, and to discover some of the many connections between the braid group and topology, geometry, algebra, and dynamics.

\begin{rmk} \label{garside}
\textup{In \cite{Garside}, Garside proves that the center of $B_n$ is generated by the full twist; that is, the element $(\sigma_1 \sigma_2 \ldots \sigma_{n-1})^n = (\sigma_{n-1} \ldots \sigma_2 \sigma_1)^n$ commutes with every other element in $B_n$. In the same work, Garside defines the \textbf{Garside element}: for the braid group $B_n$, the Garside element $\Delta_n$ is defined as follows: $\Delta_n = (\sigma_1 \sigma_2 \dots \sigma_{n-1})(\sigma_1 \ldots \sigma_{n-2}) \ldots (\sigma_1 \sigma_2)(\sigma_1)$. He notes that $(\Delta_n)^2$ is the full twist, and that $\sigma_i \Delta_n = \Delta_n \sigma_{n-i}$. These facts about the braid group will be useful in our proofs. For more about the Garside element, we recommend \cite{GonzalesMeneses:BraidGroups} as a reference.}
\end{rmk}

\begin{defn}
A braid $\beta \in B_n$ is a \textbf{positive braid}, or \textbf{braid positive}, if it contains only positive Artin generators. A knot or link is \textbf{braid positive} if it can be realized as the closure of a positive braid.
\end{defn}

\begin{defn} 
A \textbf{T-link} is a link which is realized as the closure of a positive braid $\tau$, where 
\begin{align} \label{TLink}
\tau = (\sigma_1 \sigma_2 \ldots \sigma_{p_1-1})^{q_1} (\sigma_1 \sigma_2 \ldots \sigma_{p_2-1})^{q_2} \ldots (\sigma_1 \sigma_2 \ldots \sigma_{p_s-1})^{q_s}.
\end{align}
Here, $2 \leq p_1 \leq p_2 \leq \ldots \leq p_s$, $0 < q_i$ for all $i$, and $\tau$ is a braid in $B_{p_s}$. 
\end{defn}

\begin{defn}[\`a la Vafaee \cite{Vafaee:TwistedTorusKnots}] \label{defn:TTK}
A \textbf{twisted torus knot} is realized as the closure of a positive braid $\omega$ on $n$ strands, where 
\begin{align}
\omega = (\sigma_{n-1} \sigma_{n-2} \ldots \sigma_2 \sigma_1)^p (\sigma_{n-1} \sigma_{n-2} \ldots \sigma_{n-k+1})^{qk}.
\end{align}
Here, $3 \leq n$, $2 \leq p$, $2 \leq k \leq n-1$, and $q \geq 1$.
That is, adding $q$ many positive full twists into $k$ adjacent strands of a positive torus knot yields a twisted torus knot.
\end{defn}

We note that we do not want to consider the case where $k=n$: if $k = n$, then the definition of $\omega$ in \Cref{defn:TTK} simplifies to the standard braid word for the torus link $T(n, p + qk) = T(n, p+ qn)$.

\begin{defn} \label{def:n-bridgeBraid}
An \textbf{$n$-bridge braid}, denoted $\mathcal{K}(w,b,t,n)$, is the link realized as the closure of the positive braid \[(\sigma_b \sigma_{b-1} \ldots \sigma_{1})^n(\sigma_{w-1} \ldots \sigma_2 \sigma_1)^t.\]
Here, $3 \leq w, \ 1 \leq b \leq w-2, \ t \leq 2$, and $1 \leq n$.
Qualitatively, $w$ is the number of strands on which the braid is presented, $b$ is the bridge length, $t$ is the number of twists, and $n$ is the number of bridges.
\end{defn}

Note: we do not want $b = w-1$: if this were permitted, then the braid word in \Cref{def:n-bridgeBraid} would simplify to the torus knot $T(w, n+t)$.

The family of $1$-bridge braids (e.g. where $n=1$ in \Cref{def:n-bridgeBraid}) are especially well studied: as we noted in \Cref{section:Introduction}, 1-bridge braids have been studied by Berge, Gabai, and Greene-Lewallen-Vafaee \cite{Berge:SolidTorus, Gabai:SolidTori, Gabai:1BridgeBraids, GLV:11LSpace}, amongst others. \Cref{fig:VennDiagram} organizes how 1-bridge braids, twisted torus knots, $n$-bridge braids, and T-links are related.

\begin{figure}[htb]\center
\labellist \tiny
\pinlabel {1-bridge} at 110 190
\pinlabel {braids} at 110 176
\pinlabel {twisted} at 280 204
\pinlabel {torus} at 280 190
\pinlabel {knots} at 280 176
\pinlabel {$n$-bridge} at 410 190
\pinlabel {braids} at 410 176
\pinlabel {Lorenz} at 555 238
\pinlabel {knots} at 555 224
\pinlabel {T-links} at 555 150
\pinlabel \rotatebox{90}{$\longleftrightarrow$} at 555 188
\endlabellist
\includegraphics[scale=0.4]{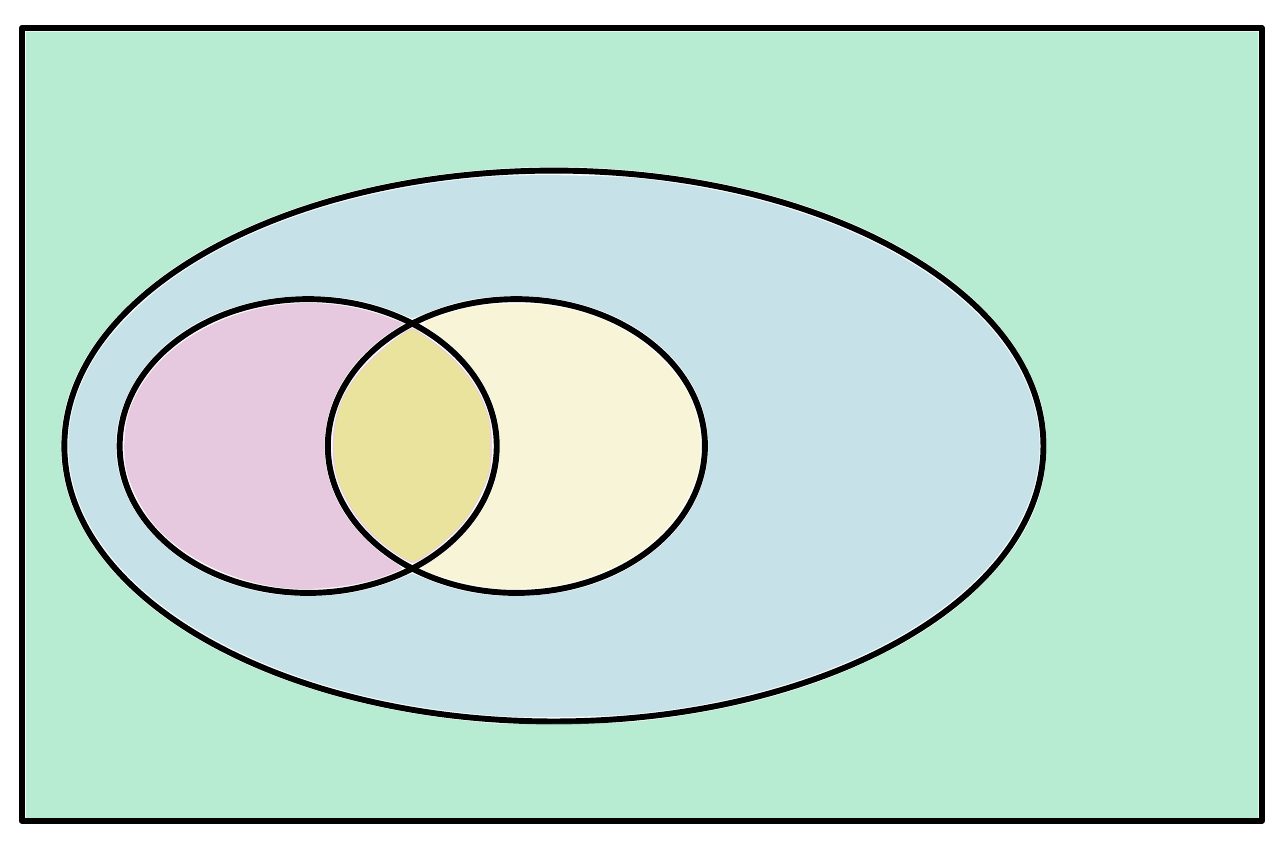}
\caption{A schematic explaining how the relevant families of knots are related. We emphasize that $n$-bridge braids can be viewed as a generalization of twisted torus knots: the hypothesis that there are $q$ full twists on $k$ adjacent strands is weakened to partial twists.}
\label{fig:VennDiagram}
\end{figure}

\begin{figure}[htb]
\begin{tikzpicture}
    \draw (1, 0) node[inner sep=0] {\includegraphics[scale=0.3]{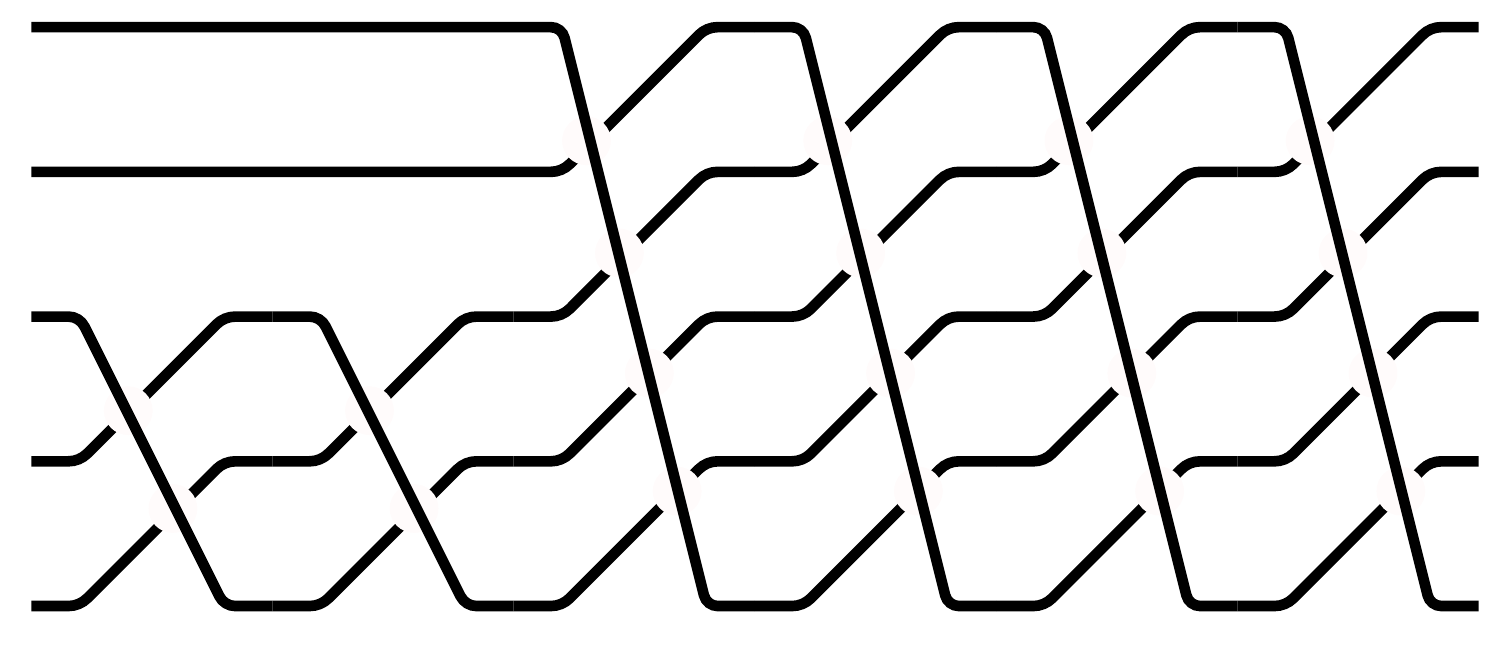}};
    \draw (-3, -1.4) node {$1$};
    \draw (-3, -.7) node {$2$};
    \draw (-3, 0) node {$3$};
    \draw (-3, .8) node {$4$};
    \draw (-3, 1.5) node {$5$};
\end{tikzpicture}
\captionof{figure}{The $n$-bridge braid $K(5, 4,2,2)$ is realized as the closure of this braid.}
\label{fig:Example}
\end{figure}

\noindent Note that we will use $\mathcal{K}(w,b,t,n)$ to denote both the link and the associated braid word \[(\sigma_b \sigma_{b-1} \ldots \sigma_{1})^n(\sigma_{w-1} \ldots \sigma_2 \sigma_1)^t.\]

To compute the braid index of a link $L$, we need a method for decreasing the number of strands in the braided presentation of $L$. This method is called \emph{destabilization}.

\begin{defn}
    Let $\omega$ be a braid word on $n$ strands. A \textbf{stabilization} replaces $\omega$ with $\omega \sigma_n$ or $\omega \sigma_n ^{-1}$, a braid word on $n+1$ strands. The reverse operation $($of replacing $\omega \sigma_n$ or $\omega \sigma_n^{-1}$, where $\omega$ has no $\sigma_n^{\pm 1}$ letters$)$ is called \textbf{destabilization}.
\end{defn}

If two braids have the same closure, then the braids must be related in a particular way. 

\begin{thm} [Markov \cite{Markov}] \label{thm:Markov}
Let $\beta_1$ and $\beta_2$ be two braid words. Then, their braid closures are isotopic if and only if $\beta_1$ and $\beta_2$ are related by any combination of: (1) braid relations, (2) conjugations, and (3) (de)stabilizations.
\end{thm}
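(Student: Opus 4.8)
The plan is to prove the two implications separately, treating the forward (``if'') direction as a direct check and reserving the real work for the converse. For the ``if'' direction, I would verify that each of the three operations preserves the isotopy type of the closure. Braid relations produce literally the same element of $B_n$, hence the same closed braid. For conjugation $\beta \mapsto \gamma\beta\gamma^{-1}$, in the closure $\widehat{\gamma\beta\gamma^{-1}}$ the arcs carrying $\gamma$ and $\gamma^{-1}$ can be slid around the braid axis along the closure strands until $\gamma$ and $\gamma^{-1}$ meet and cancel, yielding $\widehat{\beta}$; this is a planar isotopy of the closed-braid diagram. For a stabilization $\beta\mapsto\beta\sigma_n^{\pm1}$ on one extra strand, the new crossing together with the new closure arc forms a single kink, so the two closures differ by one Reidemeister~I move. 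Thus each generating move induces an ambient isotopy, and so does any composition.

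For the converse I would work in the framework where a closed braid on $n$ strands is a link $L$ in the complement of an unknotted axis $A$ meeting every meridian disk of $A$ transversely in $n$ points (equivalently, $L$ is monotone in the angular coordinate $\theta$ around $A$). Suppose $\widehat{\beta_1}\simeq\widehat{\beta_2}$. Fixing a common axis $A$, I would realize the ambient isotopy as a finite ``movie'' of link diagrams $D_0,\dots,D_m$ in which consecutive frames differ by a single Reidemeister move ($R1$, $R2$, $R3$) or a planar isotopy, with $D_0$ a braid diagram for $\beta_1$ and $D_m$ a braid diagram for $\beta_2$. The intermediate frames need not be braided with respect to $A$, so I would apply the constructive form of Alexander's theorem (the Yamada--Vogel braiding algorithm) to each frame $D_i$, producing a braid word $b_i$ closing to the corresponding link; the braiding algorithm repeatedly replaces an arc that winds the ``wrong way'' around $A$ by a pair of arcs passing on either side of $A$, a move that at the braid level is exactly a (de)stabilization together with a conjugation.

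The heart of the proof is then a local lemma: if $D_i$ and $D_{i+1}$ differ by one elementary move, then $b_i$ and $b_{i+1}$ are related by a bounded sequence of braid relations, conjugations, and (de)stabilizations. I would prove this by a case analysis on the type of move and on its position relative to the axis and the Seifert-circle/braiding data. A Reidemeister~III move, or an $R2$ move occurring inside the braided region, translates into braid relations and commutations (hence conjugations); the essential case is a move that creates or destroys a local extremum of the diagram with respect to $\theta$, since this is precisely where a strand must be stabilized or destabilized, contributing the $\sigma_n^{\pm1}$.

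The step I expect to be the main obstacle is exactly this control of the braiding under Reidemeister moves: one must show the braiding algorithm can be run \emph{compatibly} on consecutive frames, so that $b_i$ and $b_{i+1}$ differ only locally, and that the whole process terminates using only the three listed moves and no others. This is the delicate point that Markov's original argument handled too quickly and that was later made rigorous by Birman and by Morton. An alternative that sidesteps the frame-by-frame bookkeeping is the Birman--Menasco braid-foliation approach: one studies the annulus swept out by the isotopy, puts it in general position with respect to the disk fibration of the complement of $A$, and reads the required sequence of conjugations and (de)stabilizations directly off the changes in the singular foliation---at the cost of first developing the foliation machinery.
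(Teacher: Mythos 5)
This statement is quoted in the paper as a classical result with a citation to Markov; the paper gives no proof of it, so there is nothing internal to compare your argument against. Judged on its own terms, your outline is the correct and standard strategy: the ``if'' direction you give is essentially complete (braid relations fix the group element, conjugation is a slide of $\gamma$ around the closure, stabilization is a Reidemeister~I kink on the new strand), and your framing of the converse --- realize the isotopy as a movie of diagrams, braid each frame via Yamada--Vogel, and control how the braiding changes across a single Reidemeister move --- is exactly how the modern rigorous proofs (Morton, Traczyk, Birman--Menasco) proceed.

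The gap is that the converse remains a plan rather than a proof: the ``local lemma'' asserting that consecutive braidings $b_i$ and $b_{i+1}$ differ by a bounded sequence of the three listed moves is the entire content of Markov's theorem, and you state it without proving it. You correctly identify that the hard cases are moves creating or destroying angular extrema and that the braiding algorithm must be run compatibly on consecutive frames, but nothing in the proposal establishes that this compatibility can always be arranged, nor that the algorithm's output is well defined up to the three moves independently of the choices made in braiding each frame (the choice of braiding is itself only canonical up to Markov moves, which is circular unless handled carefully). So as written this is an accurate map of where the proof lives in the literature, not a proof; to count as one you would need to carry out the case analysis of the local lemma, or develop the Birman--Menasco foliation argument you mention as an alternative. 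For the purposes of this paper, citing the result, as the authors do, is the appropriate resolution.
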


In particular, Markov's theorem tells us the following: if $\alpha$ and $\beta$ are braids in $B_n$, then the braids $\alpha \sigma_n \beta$ and $\alpha \beta$ (which are braids in $B_{n+1}$) have isotopic closures as links in $S^3$. We will use this observation at various points throughout the proof of our main theorem.



Finally, to determine the braid index, we will use a result independently obtained by Morton and Franks--Williams. 

\begin{thm} [Morton \cite{Morton:PolysFromBraids, Morton:KnotPolys}; Franks--Williams \cite{FranksWilliams}] \label{thm:FMW} 
Suppose $\beta \in B_n$ is a positive braid, and $\beta = \omega (\sigma_{n-1} \ldots \sigma_1)^n$, where $\omega$ is a positive braid word. Then the braid index of $\beta$ is $n$, i.e. $i(\beta) = n$.
\end{thm}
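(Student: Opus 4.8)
The plan is to prove the two inequalities $i(\hat\beta)\le n$ and $i(\hat\beta)\ge n$ separately. The upper bound is immediate: since $\beta\in B_n$, its closure is by definition presented as the closure of a braid on $n$ strands, so $i(\hat\beta)\le n$. All of the content lies in the matching lower bound, and for this I would invoke the Morton--Franks--Williams inequality in its intrinsic form: for any link $L$ with HOMFLY polynomial $P_L(v,z)$,
\[
i(L)\;\ge\;\tfrac{1}{2}\bigl(\max\deg_v P_L-\min\deg_v P_L\bigr)+1.
\]
It therefore suffices to show that the spread of $P_{\hat\beta}$ in the variable $v$ is exactly $2(n-1)$; combined with the upper bound this pins $i(\hat\beta)$ to $n$.

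Next I would recall the two a priori one-sided degree bounds that underlie the inequality, both provable by induction on the number of crossings using the skein relation $v^{-1}P_+-vP_-=zP_0$: for any braid on $n$ strands with exponent sum (writhe) $e$, one has $\min\deg_v P_{\hat\beta}\ge e-(n-1)$ and $\max\deg_v P_{\hat\beta}\le e+(n-1)$. In particular the $v$-spread is always at most $2(n-1)$, so the lower bound above can never exceed $n$. The crux is thus to show that, for our specific $\beta=\omega(\sigma_{n-1}\cdots\sigma_1)^n$, both bounds are attained, i.e.\ the coefficients of $v^{\,e-(n-1)}$ and of $v^{\,e+(n-1)}$ in $P_{\hat\beta}$ are each nonzero.

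The hard part will be exactly this sharpness statement, and it is precisely where both hypotheses --- that $\beta$ is positive and that it contains the full twist $(\sigma_{n-1}\cdots\sigma_1)^n=\Delta^2$ --- become essential, since dropping either one can destroy sharpness. I would attack it through the Hecke-algebra model of the HOMFLY polynomial, writing $P_{\hat\beta}$, up to a monomial normalization absorbing the writhe $e$ and a factor depending on $n$, as an Ocneanu trace $\operatorname{tr}(\rho(\beta))$ of the image of $\beta$ under $\rho\colon B_n\to H_n$. Because the full twist $\Delta^2$ is central, it acts as an explicit scalar on each irreducible summand, and the goal is to argue that positivity of $\omega$ forces the extreme-$v$-degree contributions to share a common sign and hence not cancel across summands; concretely, I would track the lowest- (resp.\ highest-) $v$-degree term through the skein expansion of the positive word and show it survives, with the full twist guaranteeing that the relevant extremal term is genuinely present rather than annihilated. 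Once this no-cancellation is established the $v$-spread equals $2(n-1)$, the Morton--Franks--Williams inequality gives $i(\hat\beta)\ge n$, and together with $i(\hat\beta)\le n$ we conclude $i(\hat\beta)=n$.
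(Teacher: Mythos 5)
First, a point of comparison: the paper does not prove this statement at all --- it is imported as a black box from Morton and Franks--Williams --- so there is no in-paper argument to measure yours against. Judged on its own terms, your outline follows the standard (and essentially the original) route: the upper bound $i(\widehat{\beta})\le n$ is indeed immediate, the reduction of the lower bound to the Morton--Franks--Williams inequality is correct, and you have correctly located all of the content in the sharpness of the two degree bounds $e-(n-1)\le \min\deg_v P_{\widehat{\beta}}$ and $\max\deg_v P_{\widehat{\beta}}\le e+(n-1)$ for a positive word containing the full twist $\Delta^2=(\sigma_{n-1}\cdots\sigma_1)^n$.

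The gap is that this sharpness step --- which you yourself flag as the hard part --- is never actually carried out. Everything after that point is a statement of intent (``I would attack it through the Hecke-algebra model\ldots the goal is to argue\ldots I would track\ldots and show it survives'') rather than an argument, so the proposal as written establishes only $i(\widehat{\beta})\le n$ together with the a priori inequality $i\ge \tfrac12(\text{spread}_v P)+1$, not the claimed equality. Moreover, the one concrete mechanism you do name --- that positivity of $\omega$ forces the extremal $v$-degree contributions across the irreducible summands of $H_n$ to ``share a common sign'' --- is not sound as stated: the Ocneanu trace weights attached to the different partitions are rational functions in $v,z$ whose extremal coefficients are not sign-coherent, so a bare sign argument cannot rule out cancellation. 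A correct execution needs finer input, e.g.\ for the bottom degree the fact that for a positive braid closure the top $z$-coefficient of $P$ is the single monomial $v^{e-n+1}z^{e-n+1}$ (so the lower bound is attained by \emph{every} positive braid), and for the top degree the fact that the central full twist rescales the $\lambda$-isotypic contributions by \emph{distinct} powers of $v$, separating them so that the extremal one cannot be cancelled, together with nonvanishing of the corresponding trace on a positive word. Until one of these (or an equivalent) is proved, the key inequality $i(\widehat{\beta})\ge n$ remains unestablished.
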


As noted in \Cref{garside}, Garside proved that the positive full twist commutes with every other element in the braid group. In particular, combining with the Morton-Franks-Williams result, we see the following: if $\alpha, \beta \in B_n$, and $\alpha$ and $\beta$ are both positive braid words, then $\alpha (\sigma_{n-1} \ldots \sigma_1)^n \beta$ has braid index $n$.

\subsection{Conventions} Throughout the paper, we will indicate how the braid word changes by underlining the letters of the braid word as they are changed by braid relations, conjugations, or de-stabilizations. When we draw our braids vertically, we read them top-to-bottom. When we draw our braids horizontally, we read them from left-to-right. For us, $\sigma_i$ corresponds to strand $(i+1)$ crossing over strand $(i)$. Given a braid $\beta$, the notation $\hat \beta$ will denote its closure. Finally, we will use ``$=$" to denote that two sides of an equation are isotopic as braid closures and so are equal up to braid relations and Markov moves.

\section{$n$-bridge braids are Lorenz knots} \label{section:LorenzKnots}

Birman--Kofman \cite{BirmanKofman} showed that the class of Lorenz links coincides with that of T-links. By the Birman--Kofman conventions \cite[Equation 1]{BirmanKofman}, a twisted torus knot on $\ell$ strands is realized as the closure of the following braid:
\begin{align*}
\beta_{BK} = (\sigma_1 \sigma_2 \ldots \sigma_{r})^{kr} (\sigma_1 \sigma_2 \ldots \sigma_{\ell-1})^s.
\end{align*}

However, our definition of twisted torus knots (in \Cref{defn:TTK}) follows Vafaee's conventions \cite{Vafaee:TwistedTorusKnots}; he defines a twisted torus knot on $n$ strands to be obtained by taking the braid closure of:
\begin{align*}
\beta_V = (\sigma_{n-1} \sigma_{n-2} \ldots \sigma_2 \sigma_1)^p (\sigma_{n-1} \sigma_{n-2} \ldots \sigma_{n-k})^{qk}.
\end{align*}

It is not immediate that these braid words are Markov equivalent (and hence that their closures are isotopic knots in $S^3$). Given this discrepancy in the literature, we explicitly show that the Vafaee and Birman--Kofman twisted torus knots are Markov equivalent. The remainder of this section is devoted to this proof: we explicitly use Markov moves to put twisted torus knots and $n$-bridge braids into T-link form.

\begin{lemma} \label{lemma:LeftMove}
Fix some $w \geq 3$. Let $t \geq 2, a \geq 2,$ and $c \geq 1$. 
Let $\alpha_1 = (\sigma_a \sigma_{a+1}\ldots \sigma_{a+c})(\sigma_1 \sigma_2 \ldots \sigma_{w-1})^t$ and let $\alpha_2 =  (\sigma_{a-1} \sigma_{a}\ldots \sigma_{a+c-1})(\sigma_1 \sigma_2 \ldots \sigma_{w-1})^t$, where $\alpha_1$ and $\alpha_2$ are both elements of the braid group $B_w$. Then $\alpha_1$ and $\alpha_2$ are conjugate braids. In particular, $\widehat{\alpha_1}$ and $\widehat{\alpha_2}$ are isotopic links in $S^3$. 
\end{lemma}

\begin{proof} We do some explicit braid moves to verify the claim. For clarity, we underline the portions of the braid that are being transformed from one line to the next. We set $\gamma_w :=(\sigma_1 \sigma_2 \ldots \sigma_{w-1})$, a braid word in $B_w$. We begin by pushing some terms to the right:
\begin{align*}
\alpha_1 &= (\sigma_{a}\sigma_{a+1}\ldots \sigma_{a+c-1} \sigma_{a+c})(\sigma_1 \sigma_2 \ldots \sigma_{w-1})^t \\
&= (\sigma_{a}\sigma_{a+1}\ldots \sigma_{a+c-1} \sigma_{a+c})\ \gamma_w^t \\
%
%
&= (\sigma_{a}\sigma_{a+1}\ldots \sigma_{a+c-1}\ \underline{\sigma_{a+c}})(\sigma_1 \sigma_2 \ldots \sigma_{a+c-2}\sigma_{a+c-1}\sigma_{a+c})(\sigma_{a+c+1}\ldots \sigma_{w-2}\sigma_{w-1})\ \gamma_w^{t-1} \\
&= (\sigma_{a}\sigma_{a+1}\ldots \sigma_{a+c-1})(\sigma_1 \sigma_2 \ldots \sigma_{a+c-2}\ \underline{\sigma_{a+c}\sigma_{a+c-1}\sigma_{a+c}})(\sigma_{a+c+1}\ldots \sigma_{w-2}\sigma_{w-1})\ \gamma_w^{t-1} \\
&= (\sigma_{a}\sigma_{a+1}\ldots \underline{\sigma_{a+c-1}})(\sigma_1 \sigma_2 \ldots \sigma_{a+c-2}\ \sigma_{a+c-1}\sigma_{a+c}\sigma_{a+c-1})(\sigma_{a+c+1}\ldots \sigma_{w-2}\sigma_{w-1})\ \gamma_w^{t-1} \\
&= (\sigma_{a}\sigma_{a+1}\ldots \sigma_{a+c-2})(\sigma_1 \sigma_2 \ldots \underline{\sigma_{a+c-1}\sigma_{a+c-2}\sigma_{a+c-1}}) (\sigma_{a+c}\sigma_{a+c-1})(\sigma_{a+c+1}\ldots \sigma_{w-2}\sigma_{w-1})\ \gamma_w^{t-1} \\
&= (\sigma_{a}\sigma_{a+1}\ldots \sigma_{a+c-2})(\sigma_1 \sigma_2 \ldots \sigma_{a+c-2}\sigma_{a+c-1}\sigma_{a+c-2}) (\sigma_{a+c}\sigma_{a+c-1})(\sigma_{a+c+1}\ldots \sigma_{w-2}\sigma_{w-1})\ \gamma_w^{t-1} \\
&= (\sigma_{a}\sigma_{a+1}\ldots \underline{\sigma_{a+c-2}})(\sigma_1 \sigma_2 \ldots \sigma_{a+c-2})(\sigma_{a+c-1}\sigma_{a+c-2}) (\sigma_{a+c}\sigma_{a+c-1})(\sigma_{a+c+1}\ldots \sigma_{w-2}\sigma_{w-1})\ \gamma_w^{t-1} \\
&= (\sigma_{a}\sigma_{a+1}\ldots \sigma_{a+c-3})(\sigma_1 \ldots \underline{\sigma_{a+c-2} \sigma_{a+c-3} \sigma_{a+c-2}})(\sigma_{a+c-1}\sigma_{a+c-2}) (\sigma_{a+c}\sigma_{a+c-1})(\sigma_{a+c+1}\ldots \sigma_{w-2}\sigma_{w-1})\ \gamma_w^{t-1} \\
&= (\sigma_{a}\sigma_{a+1}\ldots \sigma_{a+c-3})(\sigma_1 \ldots \sigma_{a+c-3})(\sigma_{a+c-2} \sigma_{a+c-3})(\sigma_{a+c-1}\sigma_{a+c-2}) (\sigma_{a+c}\sigma_{a+c-1})(\sigma_{a+c+1}\ldots \sigma_{w-2}\sigma_{w-1})\ \gamma_w^{t-1}. \\
\intertext{We repeat this process -- of moving the last term of the left-most parenthetical as far into the braid as possible using commutation, and then applying the other braid relation -- until we reach $\sigma_a$. Notice that at the end of each iteration of this process, we produce a pair of adjacent terms of the form $(\sigma_{a+c-k} \sigma_{a+c-(k+1)})$. At the penultimate stage, we have:} 
&= (\underline{\sigma_{a}}) (\sigma_1 \sigma_2 \ldots \sigma_{a-2} \sigma_{a-1}) (\sigma_a \sigma_{a+1}\sigma_{a}) (\sigma_{a+2}\sigma_{a+1}) \ldots (\sigma_{a+c}\sigma_{a+c-1})(\sigma_{a+c+1}\ldots \sigma_{w-2}\sigma_{w-1})\ \gamma_w^{t-1} \\
&= (\sigma_1 \sigma_2 \ldots \sigma_{a-2}) (\underline{\sigma_a \sigma_{a-1}\sigma_a})( \sigma_{a+1}\sigma_{a}) (\sigma_{a+2}\sigma_{a+1}) \ldots (\sigma_{a+c}\sigma_{a+c-1})(\sigma_{a+c+1}\ldots \sigma_{w-2}\sigma_{w-1})\ \gamma_w^{t-1} \\
&= (\sigma_1 \sigma_2 \ldots \sigma_{a-2}) (\sigma_{a-1} \sigma_{a}\sigma_{a-1})( \sigma_{a+1}\sigma_{a}) (\sigma_{a+2}\sigma_{a+1}) \ldots (\sigma_{a+c}\sigma_{a+c-1})(\sigma_{a+c+1}\ldots \sigma_{w-2}\sigma_{w-1})\ \gamma_w^{t-1}. \\
\intertext{Reassigning some parenthesis, we obtain:}
&= (\sigma_1 \sigma_2 \ldots \sigma_{a-2}\sigma_{a-1} \sigma_{a})(\sigma_{a-1})( \sigma_{a+1}\sigma_{a}) (\sigma_{a+2}\sigma_{a+1}) \ldots (\sigma_{a+c}\sigma_{a+c-1})(\sigma_{a+c+1}\ldots \sigma_{w-2}\sigma_{w-1})\ \gamma_w^{t-1}. \\
\intertext{We observe that in each parenthetical of the form $(\sigma_{a+c-k} \sigma_{a+c-(k+1)})$, the left term has a larger index than the right term. Moreover, as we read the parentheticals from left to right, the index of the first term uniformly increases until we hit $(\sigma_{a+c+1}\ldots \sigma_{w-2}\sigma_{w-1})\ \gamma_w^t$. Therefore, we can rewrite our braid by collecting terms towards the front of the braid. In the following set of moves, we push the underlined terms to the left:}
&= (\sigma_1 \sigma_2 \ldots \sigma_{a-2}\sigma_{a-1} \sigma_{a})(\sigma_{a-1})(\underline{\sigma_{a+1}}\sigma_{a}) (\sigma_{a+2}\sigma_{a+1}) \ldots (\sigma_{a+c}\sigma_{a+c-1})(\sigma_{a+c+1}\ldots \sigma_{w-2}\sigma_{w-1})\ \gamma_w^{t-1} \\
&= (\sigma_1 \sigma_2 \ldots \sigma_{a-2}\sigma_{a-1} \sigma_{a} \sigma_{a+1})(\sigma_{a-1}\sigma_{a}) (\underline{\sigma_{a+2}}\sigma_{a+1}) \ldots (\sigma_{a+c}\sigma_{a+c-1})(\sigma_{a+c+1}\ldots \sigma_{w-2}\sigma_{w-1})\ \gamma_w^{t-1} \\
&= (\sigma_1 \sigma_2 \ldots \sigma_{a-2}\sigma_{a-1} \sigma_{a} \sigma_{a+1}\sigma_{a+2})(\sigma_{a-1}\sigma_{a}\sigma_{a+1}) \ldots (\sigma_{a+c}\sigma_{a+c-1})(\sigma_{a+c+1}\ldots \sigma_{w-2}\sigma_{w-1})\ \gamma_w^{t-1}. \\
\intertext{Repeating this leftwards operation eventually yields:}
&= (\sigma_1 \sigma_2 \ldots \sigma_{a+1}\sigma_{a+2}\ldots \sigma_{a+c})(\sigma_{a-1}\sigma_{a}\sigma_{a+1}  \ldots \sigma_{a+c-2}\sigma_{a+c-1})(\underline{\sigma_{a+c+1}\ldots \sigma_{w-2}\sigma_{w-1}})\ \gamma_w^{t-1} \\
&= (\sigma_1 \ldots \sigma_{w-1})(\sigma_{a-1}\sigma_{a}\sigma_{a+1}  \ldots \sigma_{a+c-2}\sigma_{a+c-1})\ \gamma_w^{t-1} \\
&= (\sigma_{a-1}\sigma_{a}\sigma_{a+1}  \ldots \sigma_{a+c-2}\sigma_{a+c-1})\ \gamma_w^{t} \\
&= \alpha_2.
\end{align*}
In the last step, we conjugated by $(\sigma_1 \ldots \sigma_{w-1})$. We conclude that $\alpha_1$ and $\alpha_2$ are conjugate.
\end{proof}

\begin{prop} \label{lemma:1BBsAreLorenzKnots}
1-bridge braids are Lorenz knots.
\end{prop}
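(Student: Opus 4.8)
The plan is to realize the closure of every 1-bridge braid as a $T$-link and then invoke Birman--Kofman's correspondence (stated above) to conclude it is a Lorenz link; since a 1-bridge braid is a knot and the manipulations below preserve the number of components, this upgrades to the desired statement about Lorenz \emph{knots}. Recall that the 1-bridge braid $K(w,b,t)$ is the closure of the $w$-strand braid $\beta = (\twist{w-1})^{t}\,\sigma_{w-1}\sigma_{w-2}\cdots\sigma_{w-b}$, namely a full power of the descending cycle $\twist{w-1}=\sigma_{w-1}\cdots\sigma_1$ followed by one partial descending cycle (the bridge); the only features of $\beta$ the argument uses are that it is a power of a single cycle times one shorter partial cycle. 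The standard braid for a $T$-link, by contrast, is a product of \emph{ascending} partial cycles $\sigma_1\sigma_2\cdots\sigma_{r-1}$ whose widths $r$ are non-decreasing, so the first task is to reconcile the two conventions.

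First I would apply the flip homomorphism $\phi\colon B_w\to B_w$, $\phi(\sigma_i)=\sigma_{w-i}$, which is induced by rotating the braid diagram by $180^\circ$ about its central vertical axis. Because this rotation is an ambient isotopy of $S^3$ that carries closed braids to closed braids and preserves the sign of every crossing (it flips left--right and over--under simultaneously, introducing neither a mirror image nor a reversal of the braid word), the closures of $\beta$ and $\phi(\beta)$ are isotopic. Applying $\phi$ term by term turns each descending cycle into the corresponding ascending one, giving $\phi(\beta) = (\sigma_1\sigma_2\cdots\sigma_{w-1})^{t}\,\sigma_1\sigma_2\cdots\sigma_b$. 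Since the closure is also invariant under conjugation, I would cyclically permute the bridge syllable to the front to obtain $\sigma_1\cdots\sigma_b\,(\sigma_1\cdots\sigma_{w-1})^{t}$, which is exactly the standard braid for the $T$-link $T((b+1,1),(w,t))$: the two syllables have widths $r_1=b+1$ and $r_2=w$, and the required monotonicity $b+1\le w$ holds throughout the admissible range of the bridge. As sanity checks, an empty bridge returns the torus knot $T(w,t)$, while the extremal value $b=w-1$ merges the two syllables into $(\sigma_1\cdots\sigma_{w-1})^{t+1}$, recovering $T(w,t+1)$.

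By Birman--Kofman, the $T$-link $T((b+1,1),(w,t))$ is a Lorenz link, completing the proof. I expect the only delicate point to be the bookkeeping around the flip: one must check carefully that $\phi$ is applied with \emph{no} accompanying word reversal (so that ascending cycles really appear) and that it genuinely preserves the oriented link type of the closure rather than passing to a mirror or reverse. A secondary point is confirming that the parameter ranges defining a 1-bridge braid always force $r_1\le r_2$, so that $\phi(\beta)$ lands directly in the standard $T$-link form and no further $T$-link duality is needed. The generalization to $n$-bridge braids should then follow the same template, with the $n$ partial cycles producing a $T$-link with $n+1$ parameter pairs.
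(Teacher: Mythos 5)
Your overall strategy---reflect the braid so that descending cycles become ascending ones, conjugate, and invoke the Birman--Kofman correspondence between $T$-links and Lorenz links---is the same as the paper's, and your justification of the flip $\phi(\sigma_i)=\sigma_{w-i}$ (equivalently, conjugation by the Garside element) is correct. The gap is at the very first step: the word you ``recall'' as the definition of $K(w,b,t)$ is not the one this paper uses. Here the $1$-bridge braid is the closure of $(\sigma_b\sigma_{b-1}\cdots\sigma_1)(\sigma_{w-1}\cdots\sigma_1)^t$, with the bridge occupying the \emph{lowest} $b+1$ strands, whereas you start from $(\sigma_{w-1}\cdots\sigma_1)^t\,\sigma_{w-1}\cdots\sigma_{w-b}$, with the bridge on the \emph{highest} $b+1$ strands. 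Applying $\phi$ to the paper's word yields $(\sigma_{w-b}\sigma_{w-b+1}\cdots\sigma_{w-1})(\sigma_1\cdots\sigma_{w-1})^t$, whose first syllable does not begin at $\sigma_1$, so it is \emph{not} yet in $T$-link form; the remaining work---sliding that ascending block down to $\sigma_1\cdots\sigma_b$ by explicit braid relations---is precisely \Cref{lemma:LeftMove}, which is the technical heart of the paper's proof and which your choice of starting word silently bypasses. Your list of ``delicate points'' (orientation under the flip, monotonicity of the $T$-link widths) does not include this one, which is the one that actually needs an argument.

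The two starting words do close to the same knot, but that equivalence is itself the missing step, not something to be assumed. One clean way to supply it within the paper's toolkit: by \Cref{lem4} and \Cref{lem5}, $\sigma_j\,\delta_{w-1}=\delta_{w-1}\,\sigma_{j+1}$ for $1\le j\le w-2$, where $\delta_{w-1}=\sigma_{w-1}\cdots\sigma_1$; hence conjugating $(\sigma_b\cdots\sigma_1)\,\delta_{w-1}^{\,t}$ by $\delta_{w-1}^{\,w-1-b}$ carries it to $(\sigma_{w-1}\cdots\sigma_{w-b})\,\delta_{w-1}^{\,t}$, a cyclic permutation of your word. (Alternatively, your word is the reverse of $\phi$ applied to the defining word, and the closure of a reversed word is the orientation-reverse of the original closure; that route forces you either to work with unoriented knot types or to appeal to invertibility of Lorenz knots, neither of which you address.) Once the equivalence of presentations is established, the rest of your argument---the flip, the conjugation to $\sigma_1\cdots\sigma_b(\sigma_1\cdots\sigma_{w-1})^t=T((b+1,1),(w,t))$, the check that $b+1\le w$, and the sanity checks at the extreme values of $b$---is correct and lands exactly where the paper's proof does.
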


\begin{proof}
To prove that 1-bridge braids are Lorenz knots, it suffices to show that some sequence of Markov moves transforms $\beta$ to a braid $\tau$, as in \Cref{TLink}.

Let $\beta = (\sigma_b \sigma_{b-1}\ldots \sigma_2 \sigma_1) (\sigma_{w-1}\sigma_{w-2}\ldots \sigma_2 \sigma_1)^t$ denote the standard braid presentation of a 1-bridge braid. Let $\beta' = (\sigma_{w-b}\sigma_{w-b+1}\ldots \sigma_{w-1})(\sigma_1 \sigma_2 \ldots \sigma_{w-1})^t$. We claim that $\widehat{\beta}$ and $\widehat{\beta'}$ are isotopic knots in $S^3$: view $S^3$ as $\mathbb{R}^3 \cup \{\infty\}$, and fix the circle $C = z$-axis $\cup \{\infty\}$; we represent the $z$-axis by the purple dotted line in \Cref{fig:Reflect}. We draw the braid $\beta$ on the ``left'' side of $C$, and then rotate $\beta$ about the purple line; this produces $\beta'$, which is seen on the ``right'' side of $C$. In particular, if we take $\widehat{\beta}$ and follow it through the rotation isotopy, we will get $\widehat{\beta'}$. Therefore, $\widehat{\beta} = \widehat{\beta'}$ as knots in $S^3$. Alternatively, one can use some standard results in braid theory: if we conjugate $\beta$ by the Garside element $\Delta \in B_w$, we produce $\beta'$ (see \cite{Garside, GonzalesMeneses:BraidGroups} for more details); since conjugation preserves the link type of the closure, $\widehat{\beta}$ and $\widehat{\beta'}$ present the same knot.

\begin{figure}[htb]
\centering
\begin{tikzpicture}
    \draw (1, 0) node[inner sep=0] {\includegraphics[scale=0.11]{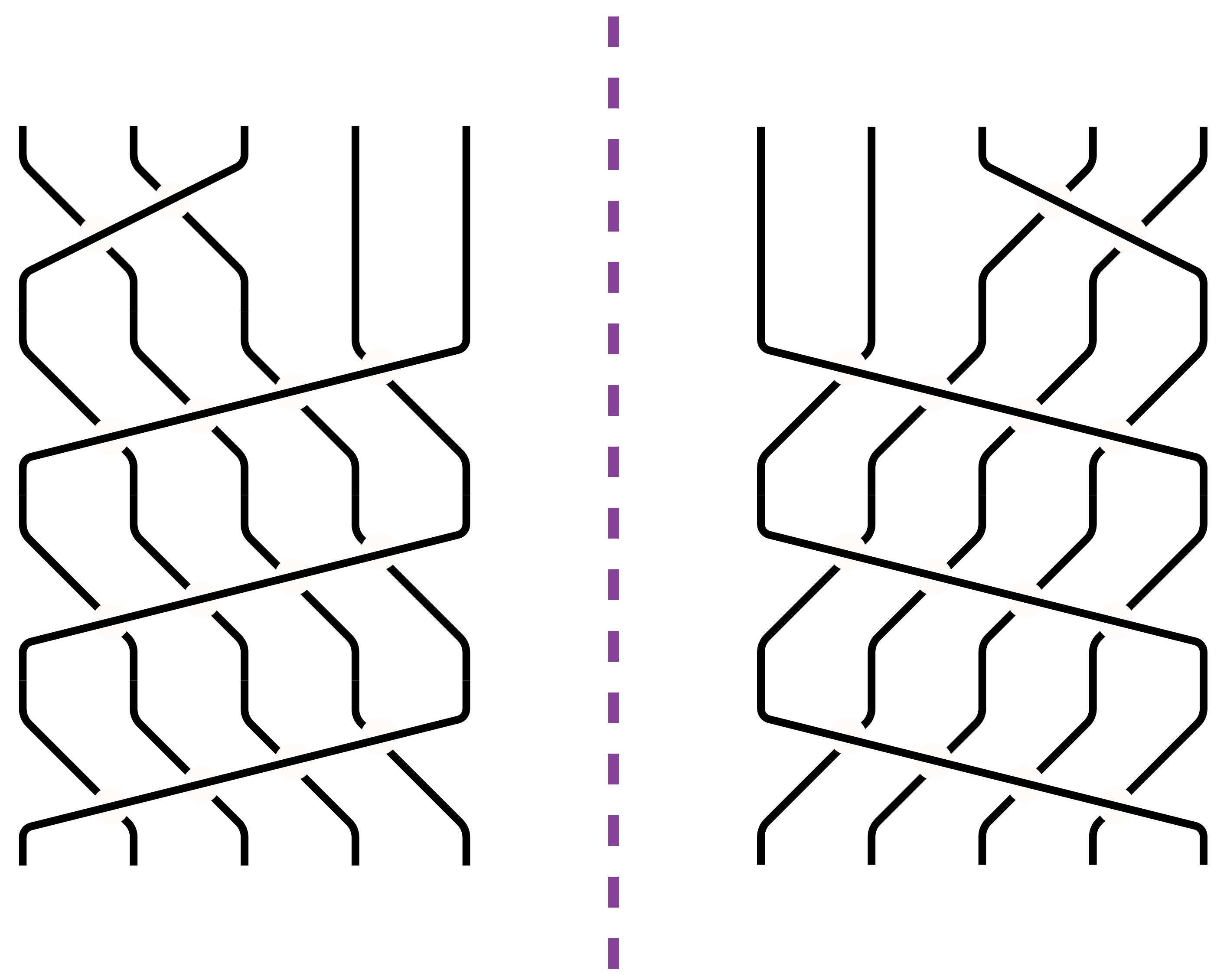}};
\end{tikzpicture}
\captionof{figure}{Rotating the left braid about the purple line produces the right braid.}
\label{fig:Reflect}
\end{figure}

Next, we perform $(w-b-1)$ many applications of \Cref{lemma:LeftMove}:
\begin{align*}
\beta &= (\sigma_b \sigma_{b-1}\ldots \sigma_2 \sigma_1) (\sigma_{w-1}\sigma_{w-2}\ldots \sigma_2 \sigma_1)^t \\
& = (\sigma_{w-b}\sigma_{w-b+1}\ldots \sigma_{w-1})(\sigma_1 \sigma_2 \ldots \sigma_{w-1})^t \\
& = (\sigma_{w-b-1}\sigma_{w-b}\ldots \sigma_{w-2})(\sigma_1 \sigma_2 \ldots \sigma_{w-1})^t \\
& = (\sigma_{1}\sigma_{2}\ldots \sigma_{b})(\sigma_1 \sigma_2 \ldots \sigma_{w-1})^t.
\end{align*}
Thus, the 1-bridge braid $\beta$ admits a T-link presentation.
\end{proof}

\begin{lemma} \label{lemma:ExamplesOfLorenzKnots}
Twisted torus knots and $n$-bridge braids are Lorenz knots.
\end{lemma}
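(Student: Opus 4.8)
The plan is to reduce both statements to the Birman--Kofman correspondence: since Lorenz knots are exactly the closures of $T$-links, it suffices to exhibit, for each family, a braid in the $T$-link form \eqref{TLink} that is Markov equivalent (via \Cref{thm:Markov}) to the given braid. I will treat the two families separately, reusing the machinery already developed for \Cref{lemma:1BBsAreLorenzKnots}.

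For twisted torus knots the argument is immediate. Starting from $\tau = (\sigma_{n-1}\ldots\sigma_1)^p(\sigma_{n-1}\ldots\sigma_{n-k})^q$, I conjugate by the Garside element $\Delta \in B_n$, which sends $\sigma_i \mapsto \sigma_{n-i}$ letter-by-letter and so preserves word order. This carries the torus block to $(\sigma_1\ldots\sigma_{n-1})^p$ and the twisting block to $(\sigma_1\ldots\sigma_k)^q$ (the precise power is immaterial to the conclusion). A single cyclic permutation, i.e. a conjugation, then moves the narrower block to the front, producing $(\sigma_1\ldots\sigma_k)^q(\sigma_1\ldots\sigma_{n-1})^p$. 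Since $k \le n-1$ we have $k+1 \le n$, so this is exactly of the form \eqref{TLink} with $(p_1,q_1)=(k+1,q)$ and $(p_2,q_2)=(n,p)$; hence the closure is a Lorenz knot.

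The $n$-bridge case is the main work, and follows the blueprint of \Cref{lemma:1BBsAreLorenzKnots}, the only new ingredient being that the short block now appears to the $n$-th power. As there, I first conjugate $K(w,b,t,n)=(\sigma_b\ldots\sigma_1)^n(\sigma_{w-1}\ldots\sigma_1)^t$ by $\Delta \in B_w$ to obtain $(\sigma_{w-b}\ldots\sigma_{w-1})^n(\sigma_1\ldots\sigma_{w-1})^t$. Writing $Q=\sigma_{w-b}\ldots\sigma_{w-1}$ and $\gamma_w=\sigma_1\ldots\sigma_{w-1}$, I extract from the braid-relation computation inside the proof of \Cref{lemma:LeftMove} the underlying group identity $Q\gamma_w=\gamma_w Q'$, where $Q'=\sigma_{w-b-1}\ldots\sigma_{w-2}$ has every index lowered by one; indeed, the final conjugation in that proof is precisely what peels the leading $\gamma_w$ back off, so cancelling the trailing $\gamma_w^{t-1}$ leaves this identity in $B_w$. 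Telescoping it past each of the $n$ factors yields $Q^n\gamma_w=\gamma_w(Q')^n$, whence $Q^n\gamma_w^t=\gamma_w(Q')^n\gamma_w^{t-1}\approx(Q')^n\gamma_w^t$ after conjugating by $\gamma_w$. This is a \emph{powered} version of \Cref{lemma:LeftMove}.

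Iterating this step $(w-b-1)$ times slides the short block all the way down, transforming the braid into $(\sigma_1\ldots\sigma_b)^n(\sigma_1\ldots\sigma_{w-1})^t$, which is of the form \eqref{TLink} with $(p_1,q_1)=(b+1,n)$ and $(p_2,q_2)=(w,t)$ (using $b<w$). Thus every $n$-bridge braid admits a $T$-link presentation and is a Lorenz knot. I expect the main obstacle to be the bookkeeping in the telescoping step: one must verify that pushing $\gamma_w$ leftward through the $n$ copies of $Q$ lowers the index of each copy while the already-transformed $Q'$ factors accumulating to its right remain inert, and that $Q\gamma_w=\gamma_wQ'$ genuinely holds as an equality in $B_w$ rather than merely up to conjugacy. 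Extracting that group identity cleanly from \Cref{lemma:LeftMove} is the delicate point; once it is in hand, both the telescoping and the final reduction to \eqref{TLink} are routine.
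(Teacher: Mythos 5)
Your proposal is correct and follows essentially the same route as the paper: conjugate by the Garside element (the paper's "reflection"), extract the group identity $(\sigma_{w-b}\ldots\sigma_{w-1})\gamma_w=\gamma_w(\sigma_{w-b-1}\ldots\sigma_{w-2})$ from the braid-relation portion of \Cref{lemma:LeftMove}, telescope it through all $n$ copies of the short block, conjugate away the leading $\gamma_w$, and iterate $w-b-1$ times to reach $(\sigma_1\ldots\sigma_b)^n(\sigma_1\ldots\sigma_{w-1})^t$. The paper makes the same observation you flag as the delicate point, namely that the only conjugation in the proof of \Cref{lemma:LeftMove} occurs at the final step, so the identity holds on the nose in $B_w$.
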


\begin{proof}
Twisted torus knots are the closures of positive braids on $w$ strands with the following form: \[\rho = (\sigma_{w-1} \sigma_{w-2} \ldots \sigma_1)^t (\sigma_{w-1} \sigma_{w-2} \ldots \sigma_{w-k})^{sk}.\] Rotating $\rho$ as in \Cref{fig:Reflect} yields $\rho' = (\sigma_1 \sigma_2 \ldots \sigma_{w-1})^t (\sigma_1 \sigma_2 \ldots \sigma_k)^{sk}$. We know that $\widehat{\rho}$ and $\widehat{\rho'}$ are isotopic knots; since $\rho'$ is presented as a T-link braid, we deduce that twisted torus knots are T-links.

Indeed, the braided presentation for $n$-bridge braids appears very similar to those of twisted torus knots (however, there is not required that $b$ divides $n$). We quickly show that these, too, are T-links:
\begin{align*}
\eta &= (\sigma_{b} \sigma_{b-1} \ldots \sigma_{1})^{n}(\sigma_{w-1} \sigma_{w-2} \ldots \sigma_1)^t \\
&= (\sigma_{w-b} \sigma_{w-b+1} \ldots \sigma_{w-1})^{n}(\sigma_{1} \sigma_{2} \ldots \sigma_{w-1})^t \\
&=(\sigma_{w-b} \sigma_{w-b+1} \ldots \sigma_{w-1})^{n-1}(\sigma_{w-b} \sigma_{w-b+1} \ldots \sigma_{w-1})(\underline{\sigma_{1} \sigma_{2} \ldots \sigma_{w-1}})(\sigma_{1} \sigma_{2} \ldots \sigma_{w-1})^{t-1}. \\
\intertext{In the proof of \Cref{lemma:LeftMove}, we only performed braid relationships -- the only place we conjugated our braid is in the last step. Thus, applying the proof of \Cref{lemma:LeftMove}, we see:}
&=(\sigma_{w-b} \sigma_{w-b+1} \ldots \sigma_{w-1})^{n-1} (\underline{\sigma_{1} \sigma_{2} \ldots \sigma_{w-1}})(\sigma_{w-b-1} \sigma_{w-b} \ldots \sigma_{w-2}) (\sigma_{1} \sigma_{2} \ldots \sigma_{w-1})^{t-1} \\
&=(\sigma_{1} \sigma_{2} \ldots \sigma_{w-1})(\sigma_{w-b-1} \sigma_{w-b} \ldots \sigma_{w-2})^n (\sigma_{1} \sigma_{2} \ldots \sigma_{w-1})^{t-1} \\
&= (\sigma_{w-b-1} \sigma_{w-b} \ldots \sigma_{w-2})^n (\sigma_{1} \sigma_{2} \ldots \sigma_{w-1})^{t}. \\
\intertext{We repeat this process an additional $w-b-2$ times, yielding:}
&= (\sigma_{1} \sigma_{2} \ldots \sigma_{b})^n (\sigma_{1} \sigma_{2} \ldots \sigma_{w-1})^{t}.
\end{align*}
Thus, $n$-bridge braids are more general than twisted torus knots, and they are T-links.
\end{proof}

\section{Preliminaries for the proof of the main theorem}
\label{section:preliminaries}

\begin{rmk*}
After posting this article to the arXiv, the author of \cite{Nie:1BBsSatellites} informed us that the results in this section are sketched in the body of the proof of Theorem 1.1 in \cite{Nie:1BBsSatellites}. This work contains the full proofs.
\end{rmk*}

\begin{defn} \label{def2}
We define $\delta_n$ and $\gamma_n$ to be the positive braid words $\delta_n := (\sigma_n \ldots \sigma_2 \sigma_1)$ and $\gamma_n := (\sigma_1 \sigma_2 \ldots \sigma_n)$ in $B_{r}$, the braid group on $r$ strands, where $r \geq n+1$. 
\end{defn}

\begin{rmk}
Note that $\mathcal{K}(w, b,t) = \widehat{\delta_b \delta_{w-1}^t}$. 
\end{rmk}

\begin{lemma} \label{lem3}
Let $\delta_k \in B_{k+1}$. Then $\delta_k \ \delta_k = \delta_{k-1}\ \delta_k \ \sigma_1$ as braid words in $B_{k+1}$. 
\end{lemma}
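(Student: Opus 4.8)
The plan is to prove the identity by induction on $k$, peeling off the leading generator $\sigma_k$ from each copy of $\delta_k$ and using a single application of the braid relation $\sigma_k\sigma_{k-1}\sigma_k = \sigma_{k-1}\sigma_k\sigma_{k-1}$ at each level so that the claim for parameter $k$ collapses to the same claim for parameter $k-1$. For the base case I would take $k=1$: here $\delta_0$ is the empty word and both sides equal $\sigma_1\sigma_1$, so the identity is immediate.

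For the inductive step (valid for $k\ge 2$) I would first record the relevant commutations. Writing $\delta_k = \sigma_k\,\delta_{k-1}$ and $\delta_{k-1}=\sigma_{k-1}\,\delta_{k-2}$, I note that $\sigma_k$ commutes with every generator appearing in $\delta_{k-2}=\sigma_{k-2}\cdots\sigma_1$, since those all have index at most $k-2$; hence $\delta_{k-2}\sigma_k=\sigma_k\delta_{k-2}$, and therefore $\delta_{k-1}\sigma_k = \sigma_{k-1}\sigma_k\,\delta_{k-2}$. I would also use the tautology $\sigma_{k-1}\delta_{k-2}=\delta_{k-1}$, which is just the definition of $\delta_{k-1}$.

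With these in hand the computation on the two sides is short. On the left I would compute
\[
\delta_k\delta_k = \sigma_k(\delta_{k-1}\sigma_k)\delta_{k-1}
= \sigma_k\sigma_{k-1}\sigma_k\,\delta_{k-2}\delta_{k-1}
= \sigma_{k-1}\sigma_k\sigma_{k-1}\,\delta_{k-2}\delta_{k-1}
= \sigma_{k-1}\sigma_k\,\delta_{k-1}\delta_{k-1},
\]
where the third equality is the braid relation and the last uses $\sigma_{k-1}\delta_{k-2}=\delta_{k-1}$. On the right the same commutation gives $\delta_{k-1}\delta_k\sigma_1 = \sigma_{k-1}\sigma_k\,\delta_{k-2}\delta_{k-1}\sigma_1$. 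Left-cancelling the common prefix $\sigma_{k-1}\sigma_k$ (legitimate since we are working in the group $B_{k+1}$), the desired equality $\delta_k\delta_k=\delta_{k-1}\delta_k\sigma_1$ reduces to exactly $\delta_{k-1}\delta_{k-1}=\delta_{k-2}\delta_{k-1}\sigma_1$, which is the inductive hypothesis.

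The only point requiring care — the step I would watch most closely — is the bookkeeping of index ranges, so that each move is legal: that $\sigma_k$ genuinely commutes past all of $\delta_{k-2}$ but not past $\sigma_{k-1}$, and that after cancellation the residual word is literally the statement one index lower. There is no deep obstacle here; the entire content lies in arranging the generators so that one braid relation does the work at each level. One could instead give a purely computational proof by sliding the second $\sigma_k$ leftward through commutations and invoking a braid relation each time it meets an adjacent $\sigma_j\sigma_{j-1}$ block, but the inductive reduction above is cleaner and avoids that long chain of rewrites.
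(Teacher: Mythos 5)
Your proof is correct, but it takes a genuinely different route from the paper. The paper proves the identity by a direct rewriting of positive words: it repeatedly takes the largest remaining generator of the second $\delta_k$, slides it leftward through commutations until it forms a subword $\sigma_j\sigma_{j-1}\sigma_j$, applies the braid relation, and iterates (their ``left push''), never leaving the positive braid monoid; this fits the paper's emphasis on explicit, effective rewriting sequences. You instead induct on $k$, using the decomposition $\delta_k=\sigma_k\delta_{k-1}$, the commutation $\delta_{k-1}\sigma_k=\sigma_{k-1}\sigma_k\delta_{k-2}$, and a single braid relation to show both sides share the prefix $\sigma_{k-1}\sigma_k$, after which left-cancellation in $B_{k+1}$ reduces the claim to the case $k-1$. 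Your index bookkeeping is right ($\sigma_k$ commutes past all of $\delta_{k-2}$ since those indices are at most $k-2$), the base case checks out, and the inductive hypothesis, being an identity among $\sigma_1,\dots,\sigma_{k-1}$, is inherited under the standard inclusion $B_k\hookrightarrow B_{k+1}$. The one stylistic difference worth noting: your cancellation step uses inverses, so on its face it proves equality of group elements rather than exhibiting a chain of positive braid relations; this is entirely sufficient for the lemma as stated (and for its use via Markov moves, which only depend on the group element), and in any case the positive braid monoid is left-cancellative, so nothing is lost. Your argument is shorter and avoids the long chain of rewrites; the paper's buys an explicit positive rewriting at the cost of length.
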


\begin{proof}
We begin by expanding the left hand side.
\begin{align}
\delta_k \ \delta_k &= (\sigma_k \  \sigma_{k-1} \ \sigma_{k-2} \ \dots \ \sigma_3 \ \sigma_2 \  \sigma_1)(\underline{\sigma_k} \dots \sigma_1) \\
&= (\underline{\sigma_k \  \sigma_{k-1} \ \sigma_k} \ \sigma_{k-2} \ \dots \  \sigma_3 \ \sigma_2 \  \sigma_1)(\sigma_{k-1} \dots \sigma_1) \\
&= (\sigma_{k-1} \  \sigma_{k} \ \sigma_{k-1} \ \sigma_{k-2} \ \sigma_{k-3}\ \dots \  \sigma_3 \ \sigma_2 \  \sigma_1)(\underline{\sigma_{k-1}} \dots \sigma_1) \\
&= (\sigma_{k-1} \  \sigma_{k} \ \underline{\sigma_{k-1} \ \sigma_{k-2} \ \sigma_{k-1}} \ \sigma_{k-3}\ \dots \  \sigma_3 \ \sigma_2 \  \sigma_1)(\sigma_{k-2} \dots \sigma_1) \\
&= (\sigma_{k-1} \  \sigma_{k} \ \underline{\sigma_{k-2}} \ \sigma_{k-1} \ \sigma_{k-2} \ \sigma_{k-3}\ \dots \  \sigma_3 \ \sigma_2 \  \sigma_1)(\sigma_{k-2} \dots \sigma_1) \\
&= (\sigma_{k-1} \  \sigma_{k-2} \ \sigma_{k} \ \sigma_{k-1} \ \sigma_{k-2} \ \sigma_{k-3}\ \dots \  \sigma_3 \ \sigma_2 \  \sigma_1)(\underline{\sigma_{k-2}} \dots \sigma_1) \\
&= (\sigma_{k-1} \  \sigma_{k-2} \ \sigma_{k} \ \sigma_{k-1} \ \underline{\sigma_{k-2} \ \sigma_{k-3}\ \sigma_{k-2}}\dots \  \sigma_3 \ \sigma_2 \  \sigma_1)(\sigma_{k-3} \dots \sigma_1) \\
&= (\sigma_{k-1} \  \sigma_{k-2} \ \sigma_{k} \ \sigma_{k-1} \ \underline{\sigma_{k-3}} \ \sigma_{k-2}\ \sigma_{k-3}\dots \  \sigma_3 \ \sigma_2 \  \sigma_1)(\sigma_{k-3} \dots \sigma_1) \\
&= (\sigma_{k-1} \  \sigma_{k-2} \ \sigma_{k-3} \ \sigma_{k} \ \sigma_{k-1}  \ \sigma_{k-2}\ \sigma_{k-3}\dots \  \sigma_3 \ \sigma_2 \  \sigma_1)(\sigma_{k-3} \dots \sigma_1).
\end{align}

We describe the operations at play: in line (3), we identify the $\sigma_k$ letter that is furthest to the right, and apply $k-2$ commuting relations to push it as much to the left as possible. This creates the underlined subword  $\sigma_{k}\sigma_{k-1} \sigma_{k}$ in line (4); applying the braid relation yields line (5). We repeat this procedure (of finding the largest letter in the right parenthetical subword, applying commuting relations to push it as far to the left as possible, and then applying a braid relation in lines (5), (6), and (7). From lines (7) to (8), we identify and execute another commuting relation. We call this 4-step procedure a \emph{left push}, and say that we perfom a \emph{left push on $\sigma_{t}$} when $\sigma_t$ is the largest letter in the second parenthetical braid word. Notice that after executing the \text{left push} operation on $\sigma_{r+1}$, the braid word decomposes into the three subwords $((\sigma_{k-1} \ldots \sigma_{r})(\sigma_{k} \ldots \sigma_{1}))(\sigma_r \ldots \sigma_1)$; this is seen explicitly in lines (8) and (11). 
After repeating the \emph{left push} operation another another $k-5$ times from line (11) onwards, we get:

\begin{align}
&= ((\sigma_{k-1} \  \sigma_{k-2} \ \sigma_{k-3} \ldots \sigma_{3} \ \sigma_{2}) \ (\sigma_{k} \ \sigma_{k-1}  \ \sigma_{k-2}\ \sigma_{k-3}\dots \  \sigma_3 \ \sigma_2 \  \sigma_1))(\sigma_{2} \ \sigma_1) \\
&= (\sigma_{k-1} \  \sigma_{k-2} \ \sigma_{k-3} \ldots \sigma_{3} \ \sigma_{2} \ \  \sigma_{k} \ \sigma_{k-1}  \ \sigma_{k-2}\ \sigma_{k-3}\dots \  \sigma_3 \ \underline{\sigma_2 \  \sigma_1 \ \sigma_{2}} \ \sigma_1) \\
&= (\sigma_{k-1} \  \sigma_{k-2} \ \sigma_{k-3} \ldots \sigma_{3} \ \sigma_{2} \ \  \sigma_{k} \ \sigma_{k-1}  \ \sigma_{k-2}\ \sigma_{k-3}\dots \  \sigma_3 \ \underline{\sigma_1} \  \sigma_2 \ \sigma_{1} \ \sigma_1) \\
&= ((\sigma_{k-1} \  \sigma_{k-2} \ \sigma_{k-3} \ldots \sigma_{3} \ \sigma_{2} \ \sigma_1) \ (\sigma_{k} \ \sigma_{k-1}  \ \sigma_{k-2}\ \sigma_{k-3}\dots \  \sigma_3  \  \sigma_2 \ \sigma_{1})) \ \sigma_1 \\
&= \delta_{k-1} \ \delta_{k} \ \sigma_1.
\end{align}
This is exactly what we wanted to show. 
\end{proof}

\begin{lemma} \label{lem4}
Let $\delta_k \in B_{k+1}$. Then $\sigma_1 \ \delta_k = \delta_k \ \sigma_2.$
\end{lemma}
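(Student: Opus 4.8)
The plan is to slide the leading $\sigma_1$ rightward through the word $\delta_k = \sigma_k\sigma_{k-1}\cdots\sigma_2\sigma_1$ using only commuting relations, and then close the argument with a single braid relation. The key observation is that $\sigma_1$ commutes with every $\sigma_j$ for which $|1-j|>1$, i.e.\ for every $j\geq 3$. Since all the letters $\sigma_k,\sigma_{k-1},\ldots,\sigma_3$ at the front of $\delta_k$ carry index at least $3$, the $\sigma_1$ passes freely past all of them, landing immediately in front of the trailing block $\sigma_2\sigma_1$.

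Concretely, I would write
\begin{align*}
\sigma_1\,\delta_k &= \sigma_1\,\sigma_k\sigma_{k-1}\cdots\sigma_3\,\sigma_2\sigma_1 \\
&= \sigma_k\sigma_{k-1}\cdots\sigma_3\,(\sigma_1\sigma_2\sigma_1),
\end{align*}
applying the commuting relation $k-2$ times. At this point I invoke the type-(2) braid relation $\sigma_1\sigma_2\sigma_1=\sigma_2\sigma_1\sigma_2$ on the parenthesized block, which gives
\begin{align*}
\sigma_1\,\delta_k &= \sigma_k\sigma_{k-1}\cdots\sigma_3\,\sigma_2\sigma_1\sigma_2 \\
&= (\sigma_k\sigma_{k-1}\cdots\sigma_2\sigma_1)\,\sigma_2 \\
&= \delta_k\,\sigma_2,
\end{align*}
as desired.

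There is essentially no obstacle here: the only point requiring care is the index bookkeeping that licenses each commutation, and this holds precisely because $\delta_k$ begins at index $k$ and descends only to $\sigma_2$ before its final $\sigma_1$, so no front letter has index $2$. The degenerate case $k=2$ (where the front block $\sigma_k\cdots\sigma_3$ is empty) collapses directly to the braid relation $\sigma_1\sigma_2\sigma_1=\sigma_2\sigma_1\sigma_2$, consistent with the general computation.
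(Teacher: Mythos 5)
Your proof is correct and is essentially identical to the paper's: both slide the leading $\sigma_1$ past the letters $\sigma_k,\ldots,\sigma_3$ by commuting relations and then apply $\sigma_1\sigma_2\sigma_1=\sigma_2\sigma_1\sigma_2$ to the trailing block before regrouping. No further comment is needed.
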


\begin{proof}
We begin by expanding the left hand side.
\begin{align*}
\sigma_1 \ \delta_k &= \sigma_1 \  \sigma_k \  \sigma_{k-1} \ \sigma_{k-2} \dots \sigma_3 \ \sigma_2 \  \sigma_1 \\
&= \underline{\sigma_1} \  \sigma_k \ \sigma_{k-1} \ \sigma_{k-2} \ \dots \sigma_3 \ \sigma_2 \ \sigma_1 \\
&= \sigma_k \ \sigma_{k-1} \ \sigma_{k-2} \dots \sigma_3 \ \underline{\sigma_1 \ \sigma_2 \ \sigma_1} \\
&= (\sigma_k \ \sigma_{k-1} \ \sigma_{k-2} \dots \sigma_3 \ \sigma_2 \ \sigma_1) \ \sigma_2 \\
&= \delta_k \sigma_2.
\end{align*}
This is what we wanted to prove.
\end{proof}

\begin{lemma} \label{lem5}
Let $\delta_k \in B_{k+1}$. Then $\sigma_j \ \delta_k = \delta_k \ \sigma_{j+1}$ when $1 < j < k$.
\end{lemma}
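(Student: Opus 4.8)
The plan is to mimic the strategy of \Cref{lem4} (which handles the case $j=1$): expand $\sigma_j\,\delta_k$ explicitly, transport the leading $\sigma_j$ rightward through $\delta_k$ using commuting relations until it meets the unique letter it cannot commute past, apply a single braid relation, and then transport the resulting excess letter out to the right, recognizing what remains as $\delta_k$.

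Concretely, writing $\delta_k = \sigma_k\sigma_{k-1}\cdots\sigma_{j+2}\,\sigma_{j+1}\,\sigma_j\,\sigma_{j-1}\cdots\sigma_1$, the first step is to observe that $\sigma_j$ commutes with every letter $\sigma_i$ having $i \geq j+2$; since the hypothesis $j<k$ guarantees these are precisely the leading letters $\sigma_k,\dots,\sigma_{j+2}$, I can slide $\sigma_j$ to the right until it sits immediately before the subword $\sigma_{j+1}\sigma_j$. This produces the subword $\sigma_j\,\sigma_{j+1}\,\sigma_j$, to which I apply the braid relation $\sigma_j\sigma_{j+1}\sigma_j = \sigma_{j+1}\sigma_j\sigma_{j+1}$.

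The final step is to push the newly created trailing $\sigma_{j+1}$ rightward. Because $j>1$, the letters to its right are $\sigma_{j-1},\sigma_{j-2},\dots,\sigma_1$, all of index at most $j-1$, so $\sigma_{j+1}$ commutes past each of them and emerges at the very end of the word. At this point the remaining letters read $\sigma_k\sigma_{k-1}\cdots\sigma_{j+2}\,\sigma_{j+1}\,\sigma_j\,\sigma_{j-1}\cdots\sigma_1 = \delta_k$, so the whole expression equals $\delta_k\,\sigma_{j+1}$, as desired.

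I do not expect a genuine obstacle here: the only thing to be careful about is bookkeeping of indices, namely verifying that the two commuting passes really do satisfy $|i-j|>1$ and $|i-(j+1)|>1$ respectively, and that the hypotheses $1<j<k$ are exactly what is needed to guarantee both a valid $\sigma_{j+1}$ for the braid relation and a nonempty tail $\sigma_{j-1}\cdots\sigma_1$ on the right. The argument uses only a block of commuting relations, one braid relation, and a second block of commuting relations, so it can be written compactly with underlining in the style of \Cref{lem4}.
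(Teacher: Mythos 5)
Your proposal is correct and is essentially identical to the paper's proof: both slide $\sigma_j$ rightward past the commuting letters $\sigma_k,\dots,\sigma_{j+2}$, apply the single braid relation $\sigma_j\sigma_{j+1}\sigma_j=\sigma_{j+1}\sigma_j\sigma_{j+1}$, and then commute the trailing $\sigma_{j+1}$ past $\sigma_{j-1},\dots,\sigma_1$ to the end of the word. The index bookkeeping you flag ($|i-j|>1$ for $i\geq j+2$ and $|i-(j+1)|>1$ for $i\leq j-1$) checks out, so there is nothing to add.
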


\begin{proof}
Suppose $1<j<k$. We begin by expanding $\sigma_j \delta_k$:
\begin{align*}
\sigma_j \delta_k 
&= \underline{\sigma_j}(\sigma_k \sigma_{k-1} \dots \sigma_{j+2} \  \sigma_{j+1} \sigma_j {\sigma_{j-1}} \ \dots \sigma_1) \\
&= \sigma_k \sigma_{k-1} \dots \sigma_{j+2} \ \underline{\sigma_j \sigma_{j+1} \sigma_j} {\sigma_{j-1}} \ \dots \sigma_1 \\
&= \sigma_k \sigma_{k-1} \dots \sigma_{j+2} \ \sigma_{j+1} \sigma_{j} \underline{\sigma_{j+1}} {\sigma_{j-1}} \ \dots \sigma_1 \\
&= (\sigma_k \sigma_{k-1} \dots \sigma_{j+2} \ \sigma_{j+1} \sigma_{j} {\sigma_{j-1}} \ \dots \sigma_1) \sigma_{j+1}
\end{align*}
which is $\delta_k \sigma_{j+1}$, as desired. 
\end{proof}

\begin{lemma} \label{lem6}
Let $\delta_k  \in B_{k+1}$.  When $s < k$, we have
$\sigma_1 \ \delta_k^s = \delta_k^s \ \sigma_{s+1}$.
\end{lemma}

\begin{proof}
We see that
\begin{align*}
     \sigma_1 {\delta_k}^s &= \sigma_1 \delta_k {\delta_k}^{s-1}\\
    &= \delta_k \sigma_2 \delta_k {\delta_k}^{s-2} \text{ by \Cref{lem4}, } \\
    &= {\delta_k}^2 \sigma_3 \delta_k {\delta_k}^{s-3} \text{ by \Cref{lem5}.}  
    \end{align*}
Applying \Cref{lem5} a total of $s-1$ times, we get: 
\begin{align*}
    \delta_k^s \ \sigma_{s+1}. 
\end{align*}
Thus, $\sigma_1 \ \delta_k^s = \delta_k^s \ \sigma_{s+1}$.
\end{proof}

Note that \Cref{lem5} generalizes \Cref{lem4}, and \Cref{lem6} combines Lemmas~\ref{lem4} and \ref{lem5}.

\begin{prop} \label{prop8}
Let $\delta_j \in B_{j+1}$. Then 
$\delta_j^t = \delta_{j-1} \ \delta_j^{t-1} \ \sigma_{t-1}$,
where $t<j$.
\begin{proof}
We see that
\begin{align*}
    \delta_j^t &= (\delta_j \delta_j) \delta_j^{t-2}\\
    &= (\delta_{j-1}\delta_j\sigma_1)\delta_j^{t-2} \text{ by \Cref{lem3} }\\
    &=\delta_{j-1}\delta_j(\sigma_1 \delta_j^{t-2})\\
    &= \delta_{j-1}\delta_j(\delta_j^{t-2}\sigma_{(t-2)+1}) \text{ by \Cref{lem6}}\\
    &= \delta_{j-1}\delta_j\delta_j^{t-2}\sigma_{t-1}\\
    &= \delta_{j-1}\delta_j^{t-1}\sigma_{t-1}.
\end{align*}
Thus, $\delta_j^t = \delta_{j-1} \ \delta_j^{t-1} \ \sigma_{t-1}$.
\end{proof}
\end{prop}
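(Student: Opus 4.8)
The plan is to peel off two copies of $\delta_j$ from the front of $\delta_j^t$, convert that pair using \Cref{lem3}, and then commute the leftover $\sigma_1$ rightward through the remaining power of $\delta_j$ using \Cref{lem6}. Concretely, I would first write $\delta_j^t = (\delta_j\,\delta_j)\,\delta_j^{t-2}$, which requires $t \geq 2$ (the stated formula is degenerate for $t=1$, since then $\sigma_{t-1}=\sigma_0$ is not a generator, so we take $t\geq 2$ throughout).

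Next I would apply \Cref{lem3} with $k=j$ to the bracketed pair, replacing $\delta_j\,\delta_j$ by $\delta_{j-1}\,\delta_j\,\sigma_1$, which yields $\delta_j^t = \delta_{j-1}\,\delta_j\,\sigma_1\,\delta_j^{t-2}$. The only genuine manipulation that remains is to slide the single $\sigma_1$ past the trailing block $\delta_j^{t-2}$. This is exactly the content of \Cref{lem6}: taking $s = t-2$ gives $\sigma_1\,\delta_j^{t-2} = \delta_j^{t-2}\,\sigma_{(t-2)+1} = \delta_j^{t-2}\,\sigma_{t-1}$. Substituting and recombining $\delta_j\,\delta_j^{t-2} = \delta_j^{t-1}$ then produces $\delta_j^t = \delta_{j-1}\,\delta_j^{t-1}\,\sigma_{t-1}$, as desired.

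The one point requiring care, and the only place the hypothesis $t<j$ enters, is the index constraint in \Cref{lem6}, which requires $s < k$, i.e. $t-2 < j$; this follows immediately from $t<j$. I expect this to be the main (really the only) obstacle, in the sense that it is where the hypothesis is used and where an off-by-one slip in the subscript of $\sigma_{t-1}$ could creep in. It is worth verifying the smallest case $t=2$ separately: there the claimed identity reads $\delta_j^2 = \delta_{j-1}\,\delta_j\,\sigma_1$, which is precisely \Cref{lem3}, and the degenerate invocation $\sigma_1\,\delta_j^0 = \sigma_1 = \delta_j^0\,\sigma_1$ is consistent with setting $s=0$ in \Cref{lem6}. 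Since every step is a direct citation of an already-established lemma, I do not anticipate any further difficulties.
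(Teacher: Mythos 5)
Your proposal is correct and follows exactly the same route as the paper's proof: peel off $\delta_j\,\delta_j$, apply \Cref{lem3}, then slide the resulting $\sigma_1$ through $\delta_j^{t-2}$ via \Cref{lem6} with $s=t-2$. Your added remarks about the $t\geq 2$ requirement and the index constraint $t-2<j$ are sound observations that the paper leaves implicit.
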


\begin{prop} \label{prop9}
Let $\delta_k, \gamma_{t-1} \in B_{k+1}$. Then, 
$\delta_k^t = \delta_{k-1}^{t-1} \ \delta_k \  \gamma_{t-1}$
where $t<k$.
\end{prop}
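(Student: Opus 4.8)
The plan is to recognize \Cref{prop9} as nothing more than the iterated form of \Cref{prop8}, and to establish it cleanly by induction on $t$. The engine is \Cref{prop8}, which strips a single $\delta_{k-1}$ off the front of $\delta_k^t$ at the cost of appending one generator $\sigma_{t-1}$ at the back; running this $t-1$ times should collect a factor $\delta_{k-1}^{t-1}$ in front, leave a lone $\delta_k$, and deposit the ascending string $\sigma_1\sigma_2\cdots\sigma_{t-1} = \gamma_{t-1}$ at the end.

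For the base case I would take $t=2$. There the claim reads $\delta_k^2 = \delta_{k-1}\,\delta_k\,\gamma_1$, and since $\gamma_1 = \sigma_1$ this is exactly \Cref{lem3}. (If one wants to include $t=1$, the statement degenerates to $\delta_k=\delta_k$ under the conventions $\delta_{k-1}^0 = \gamma_0 = 1$, so nothing is lost.) For the inductive step I would assume the identity at exponent $t-1$, namely $\delta_k^{t-1} = \delta_{k-1}^{t-2}\,\delta_k\,\gamma_{t-2}$; this is legitimate because $t<k$ forces $t-1<k$, so the hypothesis of the proposition holds at the smaller exponent. Applying \Cref{prop8} once (valid since $t<k$) gives
\[
\delta_k^t = \delta_{k-1}\,\delta_k^{t-1}\,\sigma_{t-1},
\]
and substituting the inductive hypothesis for $\delta_k^{t-1}$ yields
\[
\delta_k^t = \delta_{k-1}\cdot\delta_{k-1}^{t-2}\,\delta_k\,\gamma_{t-2}\cdot\sigma_{t-1} = \delta_{k-1}^{t-1}\,\delta_k\,(\gamma_{t-2}\,\sigma_{t-1}).
\]
The proof then closes by the observation $\gamma_{t-2}\,\sigma_{t-1} = (\sigma_1\sigma_2\cdots\sigma_{t-2})\sigma_{t-1} = \sigma_1\sigma_2\cdots\sigma_{t-1} = \gamma_{t-1}$, which is immediate from the definition of $\gamma_n$.

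The only thing that requires genuine care is bookkeeping rather than any real difficulty: I must confirm that the index condition $t<k$ of \Cref{prop8} persists at every exponent I invoke, which it does automatically since \Cref{prop8} is only ever applied at exponents $\le t < k$, and I must record the terminal absorption $\gamma_{t-2}\sigma_{t-1}=\gamma_{t-1}$ correctly so that the ascending generators assemble into a single $\gamma_{t-1}$ rather than a scrambled product. There is no topological or deep combinatorial obstacle here; all of the content of \Cref{prop9} is already carried by \Cref{prop8} (equivalently by \Cref{lem3}), and \Cref{prop9} simply packages the $(t-1)$-fold iteration into closed form.
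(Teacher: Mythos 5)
Your proof is correct and follows essentially the same route as the paper: both arguments use \Cref{prop8} as the sole engine, peeling off one $\delta_{k-1}$ at a time while the generators $\sigma_1,\ldots,\sigma_{t-1}$ accumulate on the right into $\gamma_{t-1}$. The paper simply writes out the $(t-1)$-fold iteration explicitly, whereas you package the same unrolling as an induction on $t$ with base case \Cref{lem3}; the bookkeeping of the index condition $t<k$ and the absorption $\gamma_{t-2}\sigma_{t-1}=\gamma_{t-1}$ are handled correctly.
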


\begin{proof}
Note that
\begin{align*}
    \delta_k^t &= \delta_{k-1} ({\delta_k}^{t-1}) \sigma_{t-1} \text{ by \Cref{prop8}} \\
    &= \delta_{k-1} (\delta_{k-1} {\delta_k}^{t-2} \sigma_{t-2}) \sigma_{t-1} \text{ by \Cref{prop8}} \\
    &= \delta_{k-1}^2 \ ({\delta_k}^{t-2}) \  \sigma_{t-2} \sigma_{t-1}. 
    \intertext{Iteratively apply \Cref{prop8} an additional $t-3$ times to the rightmost $\delta_{k}^{t - \star}$ to obtain}
  \delta_k^t  &= \delta_{k-1}^{t-1} \ \delta_k^1 \ (\sigma_1 \sigma_2 \ldots \sigma_{t-1}) \\
  &= \delta_{k-1}^{t-1} \ \delta_k \ \gamma_{t-1}.
\end{align*} 
This yields the desired conclusion.
\end{proof}

\begin{prop} \label{prop:SameSuperSub}
Let $\delta_k \in$ $B_{k+1}$. Then,
$\delta_k^k = \delta_{k-1} (\delta_k^{k-1})\sigma_{k-1}$.
\end{prop}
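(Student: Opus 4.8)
The plan is to recognize that the claimed identity is exactly the conclusion of \Cref{prop8} in the boundary case $t = j = k$, which falls just outside that proposition's hypothesis $t < j$. Rather than invoke \Cref{prop8} as a black box, I would rerun its short computation and verify that every step survives at the boundary. Throughout I assume $k \geq 2$, so that $\delta_{k-1}$ and $\sigma_{k-1}$ are legitimate objects in $B_{k+1}$.

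First I would split off two copies of $\delta_k$ and apply \Cref{lem3}:
\begin{align*}
\delta_k^k &= (\delta_k\,\delta_k)\,\delta_k^{k-2} \\
&= (\delta_{k-1}\,\delta_k\,\sigma_1)\,\delta_k^{k-2} \\
&= \delta_{k-1}\,\delta_k\,(\sigma_1\,\delta_k^{k-2}).
\end{align*}
Next I would commute the $\sigma_1$ past the remaining power of $\delta_k$ using \Cref{lem6}. This is the single inequality-sensitive step: \Cref{lem6} requires the exponent $s$ in $\sigma_1\,\delta_k^s = \delta_k^s\,\sigma_{s+1}$ to satisfy $s < k$. Here $s = k-2$, and since $k-2 < k$ the hypothesis is met, giving $\sigma_1\,\delta_k^{k-2} = \delta_k^{k-2}\,\sigma_{k-1}$. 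Reassembling,
\begin{align*}
\delta_k^k &= \delta_{k-1}\,\delta_k\,\delta_k^{k-2}\,\sigma_{k-1} \\
&= \delta_{k-1}\,\delta_k^{k-1}\,\sigma_{k-1},
\end{align*}
which is precisely the asserted identity.

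The only genuine obstacle is the boundary inequality just flagged. In the proof of \Cref{prop8}, the hypothesis $t < j$ was used solely to license the application of \Cref{lem6}, so I must confirm that dropping to $t = j = k$ does not break it. It does not: applying \Cref{lem3} consumes one copy of $\delta_k$, and splitting off the commuting $\sigma_1$ lowers the exponent by one more, so the power actually fed into \Cref{lem6} is $\delta_k^{k-2}$, whose exponent is strictly below $k$. I would also note that the generator $\sigma_{k-1}$ produced on the right is the top generator of $B_{k+1}$, so the resulting word lives in the correct braid group and no enlargement of the strand count is needed. With these two observations the computation is complete.
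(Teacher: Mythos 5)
Your proof is correct, but it takes a different (equally short) route from the paper's. You decompose $\delta_k^k = (\delta_k\delta_k)\,\delta_k^{k-2}$ and rerun the argument of \Cref{prop8} at the boundary $t=j=k$, using \Cref{lem3} to split off $\delta_{k-1}\delta_k\sigma_1$ and then \Cref{lem6} (with $s=k-2<k$, so its hypothesis is genuinely satisfied) to carry the $\sigma_1$ to the right as $\sigma_{k-1}$. The paper instead decomposes $\delta_k^k = (\delta_k^{k-1})\,\delta_k$, applies \Cref{prop8} as a black box to $\delta_k^{k-1}$ (legitimate since $k-1<k$) to get $\delta_{k-1}\delta_k^{k-2}\sigma_{k-2}\,\delta_k$, and then uses \Cref{lem5} to push the single generator $\sigma_{k-2}$ past the final $\delta_k$, turning it into $\sigma_{k-1}$. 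Your version has the virtue of making explicit exactly where the hypothesis $t<j$ of \Cref{prop8} is used and why it can be relaxed by one at the top, which is arguably more illuminating; the paper's version avoids reopening the proof of \Cref{prop8} at the cost of one extra commutation lemma. (A small point common to both: the very low cases $k=2,3$ make some of the invoked lemmas degenerate --- e.g.\ \Cref{lem5} requires $1<j$, so the paper's use of it needs $k>3$ and otherwise falls back on \Cref{lem4} --- but this does not affect the substance of either argument.)
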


\begin{proof}
\begin{align*}
\delta_k^k
&= (\delta_k^{k-1}) \delta_k \\
 &= (\delta_{k-1} \delta_k ^{k-2} \sigma_{k-2}) \delta_k  \text{\qquad by \Cref{prop8}}\\
 &= \delta_{k-1} \delta_k ^{k-2} (\sigma_{k-2} \delta_k)  \\
 &= \delta_{k-1} \delta_k^{k-2}(\delta_k \sigma_{k-1})  \text{\qquad by \Cref{lem5}} \\
 &= \delta_{k-1} (\delta_k^{k-1})\sigma_{k-1}.
\end{align*}
This is what we wanted to show.
\end{proof}

\begin{prop} \label{2023prop}
Let $\delta_k \in B_{k+1}$, and suppose $\alpha \in B_{k}$ $($so, in particular, $\alpha$ is a braid word on strictly fewer strands than $\delta_k)$. Then $\alpha \ \delta_k^k = \alpha \ \delta_{k-1}^{k} \gamma_{k-1}$.
\end{prop}

\begin{proof}
The proof requires straightforward applications of \Cref{prop9} and \Cref{prop:SameSuperSub}.
\begin{align*}
    \alpha \ \delta_k^k 
    &= \alpha \ (\delta_{k-1} \ \delta_{k}^{k-1} \ \sigma_{k-1}) \text{ \qquad by \Cref{prop:SameSuperSub}} \\
    &= \alpha \ \delta_{k-1} \ (\delta_{k-1}^{k-2} \ \delta_{k} \ \gamma_{k-2})\ \sigma_{k-1} \text{ \qquad by \Cref{prop9}} \\
    &= \alpha \ \delta_{k-1}^{k-1} \ \delta_{k} \ \gamma_{k-1} \\
    &= \alpha \ \delta_{k-1}^{k-1} \ \sigma_k \ \delta_{k-1} \ \gamma_{k-1} \\
\intertext{Since $\alpha$ is a braid in $B_k$, there is a unique $\sigma_k$ letter in this braid. So, we can destabilize to get:}
    &= \alpha \ \delta_{k-1}^{k} \ \gamma_{k-1}\\
\end{align*}
This is what we wanted to show.
\end{proof}

\section{Proof of the main theorem}
\label{section:n-bridge}

\noindent \textbf{\Cref{thm:NBridge}.} \textit{The braid index of an n--bridge braid $$\mathcal{K}(w,b,t,n)$$ is:}
\begin{center}
$ i(\mathcal{K}(w,b,t,n)) = \begin{cases} 
      w &  t \geq w, \ n\geq 1 \\
      t &  w > t > b, \ n \geq 1 \\
      t+1 & w > b \geq t, n=1\\
      b+1 & w > b\geq t, n+t \geq b+1, n>1\\
      n+t & w > b \geq t, n+t < b+1, n>1.
      \end{cases} $
\end{center}

\begin{proof} 

We use \Cref{prop8} and \Cref{prop9} and destabilizations to find a presentation of the knot which allows us to apply \Cref{thm:FMW}. \\

\noindent \fbox{\textbf{Case 1: $t \geq w, n \geq 1$}} \\

\noindent Let $t \geq w$ and $n \geq 1$. Then, we know:
\begin{align*}
    \mathcal{K}(w,b,t,n)
    &= \delta_b^n \delta_{w-1}^t \\
    &= \delta_b^n \delta_{w-1}^w \delta_{w-1}^{t-w}.
\end{align*}
Since $(\sigma_{w-1} \sigma_{w-2} \ldots \sigma_1)^w$ is a full twist on $w$ strands, by applying   \Cref{thm:FMW}, $i(\mathcal{K}(w,b,t,n)) = w$.
\bigbreak \noindent

\noindent \fbox{\textbf{Case 2: $w > t > b, n \geq 1$}} \\

\noindent Suppose $w > t > b$ and $n \geq 1$. If $w-1 = t > b$, then we have:
\begin{align*}
    \mathcal{K}(w,b,t,n) &= \delta_b^n \delta_{w-1}^t\\
    &= \delta_b^n \delta_{w-1}^{w-1} \\
    &= \delta_b^n \ \delta_{w-2}^{w-1} \ \gamma_{w-2} \text{ \qquad by \Cref{2023prop}}\\
    &= \delta_b^n \ \delta_{t-1}^{t} \ \gamma_{t-1} \text{ \qquad since $t = w-1$}
\end{align*}
Therefore, the braid can be written to contain $\delta_{t-1}^t$, which is a full twist on $t$ strands. By \Cref{thm:FMW}, $i(\mathcal{K}(w,b,t,n)) = t$ if $w-1 = t > b, n \geq 1$. \\

\noindent We now study what happens if $w-1 > t > b$. We know:
\begin{align*}
    \mathcal{K}(w,b,t,n) &= \delta_b^n \underline{\delta_{w-1}^t}\\
    &= \delta_b^n \underline{\delta_{w-2}^{t-1} \delta_{w-1} \gamma_{t-1}} \text{ by \Cref{prop9}} \\
    &= \delta_b^n \delta_{w-2}^{t-1} \underline{\sigma_{w-1} \delta_{w-2}} \gamma_{t-1}.
\end{align*}
Since $w > t > b$, then $w-1 \geq t > b$, hence there is a single $\sigma_{w-1}$ in the braid word, which is currently in $B_{w-1}$. Thus, we can destablize the braid to produce a new braid in $B_{w-2}$:
\begin{align*}
\mathcal{K}(w,b,t,n) &= \delta_b^n \delta_{w-2}^{t-1} \underline{\sigma_{w-1} \delta_{w-2}} \gamma_{t-1} \\
&= \delta_b^n \delta_{w-2}^{t-1} \underline{\delta_{w-2}} \gamma_{t-1} \\
&= \delta_b^n \delta_{w-2}^{t} \gamma_{t-1}.
\end{align*}

\noindent We iteratively: (1) apply \Cref{prop9} to the rightmost $\delta_{w-\star}^t$ term, and (2) destabilize the largest remaining Artin generator. Since $w > t > b$ we can repeat the above process a total of $w - t$ times, after which we have:
\begin{align*}
\mathcal{K}(w, b,t) &= \delta_b^n \delta_{t-1}^t \gamma_{t-1}^{w-t}.
\end{align*}
As $t > b$, we know that $t-1 \geq b$, hence $\delta_{b}^n$ contains no $\sigma_s$ letters, where $s \geq t-1$. Moreover, this is a braid word in $B_t$, and it contains a full twist on $t$ strands. By \Cref{thm:FMW}, $i(\mathcal{K}(w,b,t,n)) = t$. 
\bigbreak \noindent

\noindent \fbox{\textbf{Case 3: $w > b \geq t, n=1$}} \\

\noindent Our definition of a 1-bridge braid requires that $w-2 \geq b$, so we may revise our assumptions to be $w-2 \geq b \geq t$. In particular, note that $t < w-1$. We have:
\begin{align*}
    \mathcal{K}(w, b,t) &= \delta_b \underline{\delta_{w-1}^t} \\
    &= \delta_b \underline{\delta_{w-2}^{t-1} \delta_{w-1} \gamma_{t-1}} \text{ by \Cref{prop9}} \\ 
    &= \delta_b \delta_{w-2}^{t-1} \underline{\sigma_{w-1} \delta_{w-2}} \gamma_{t-1}.
\end{align*}
Since $ w-2 \geq b$, there is a single $\sigma_{w-1}$, and we can destabilize the braid:
\begin{align*}
\mathcal{K}(w, b,t) &= \delta_b \delta_{w-2}^{t-1} \underline{\sigma_{w-1}} \delta_{w-2} \gamma_{t-1} \\
&= \delta_b \delta_{w-2}^{t-1} \delta_{w-2} \gamma_{t-1}\\
&= \delta_b \delta_{w-2}^{t} \gamma_{t-1}.
\end{align*}
We iteratively: (1) apply \Cref{prop9} to the rightmost $\delta_{w-\star}^t$ term, and (2) destabilize the largest remaining Artin generator. 
Since $w > b \geq t$, we can repeat the above process a total of $w - b-1$ times, after which we have:
\begin{align*}
\mathcal{K}(w, b,t) &= \delta_b \underline{\delta_{w-2}^{t} \gamma_{t-1}} \\
&= \delta_b \underline{\delta_{b}^t \gamma_{t-1}^{w-b-1}}. 
\end{align*}
This braid word is in $B_{b+1}$, the braid group on $b+1$ strands.
If $b=t$, then: \[\mathcal{K}(w, b,t) = \delta_b^{t+1}\gamma_{t-1}^{w-b-1} = \delta_t^{t+1}\gamma_{t-1}^{w-b-1}.\]
Applying \Cref{thm:FMW} allows us to conclude that $i(\mathcal{K}(w, b,t))=t+1$. \\

Otherwise, $b>t$ and we iteratively: (1) apply \Cref{prop9} to the $\delta_{b - \star}^{t+1}$ term (note that $\star = 0$ to start), and (2) destabilize the largest remaining Artin generator. Since $b > t$, we can repeat this process a total of $b-t$ times to obtain:
\begin{align*}
\mathcal{K}(w, b,t) &= \delta_b^{t+1} \gamma_{t-1}^{w-b-1} \\
& = \delta_{t}^{t+1} \gamma_{t}^{b-t} \gamma_{t-1}^{w-b-1}.
\end{align*}
Once again, by \Cref{thm:FMW}, we deduce $i(\mathcal{K}(w, b,t))=t+1$. \\

\noindent \fbox{\textbf{Case 4: $w > b\geq t, n+t \geq b+1, n>1$}} \\

\noindent Suppose $w > b\geq t, n+t \geq b+1,$ and $n>1.$ Our definition of a 1-bridge braid requires that $w-2 \geq b$, so we may revise our assumptions to be $w-2 \geq b \geq t$. In particular, note that $t < w-1$. We have:
\begin{align*}
    \mathcal{K}(w,b,t,n) &= \delta_b^n (\delta_{w-1}^t)\\
    &= \delta_b^n \delta_{w-2}^{t-1} (\delta_{w-1}) \gamma_{t-1} \text{ by \Cref{prop9}} \\
    &= \delta_b^n \delta_{w-2}^{t-1}(\sigma_{w-1} \delta_{w-2}) \gamma_{t-1}
\end{align*}
Since $w-2 \geq b$, there is a single $\sigma_{w-1}$, and we can destabilize the braid:
\begin{align*}
\mathcal{K}(w, b,t) &= \delta_b^n \delta_{w-2}^{t-1} (\sigma_{w-1}) \delta_{w-2} \gamma_{t-1} \\
&= \delta_b^n \delta_{w-2}^{t-1} \delta_{w-2} \gamma_{t-1}\\
&= \delta_b^n \delta_{w-2}^{t} \gamma_{t-1}.
\end{align*} 

\noindent We iteratively: (1) apply \Cref{prop9} to the rightmost $\delta_{w - \star}^t$ term, and (2) destabilize the largest remaining Artin generator. Since $w > b \geq t$, we can repeat the above process a total of $w - b-1$ times, after which we have:
\begin{align*}
\mathcal{K}(w, b,t) &= \delta_b^n \delta_{w-2}^t \gamma_{t-1} \\
&= \delta_b^n \delta_{b}^{t} \gamma_{t-1}^{w-b-1}.
\end{align*}
This is a braid word on $b+1$ strands.
As $n+t \geq  b+1$, we get 
\begin{align*}
\mathcal{K}(w,b,t) &= \delta_b^n \delta_b^t \gamma_{t-1}^{w-b-1} \\
&= \delta_b^{n+t} \gamma_{t-1}^{w-b-1}.
\end{align*}
Applying \Cref{thm:FMW}, we deduce $i(\mathcal{K}(w,b,t,n))=b+1$. \\

\noindent \fbox{\textbf{Case 5: $w > b \geq t, n+t < b+1$}} \\

\noindent Our definition of a 1-bridge braid requires that $w-2 \geq b$, so we may revise our assumptions to be $w-2 \geq b \geq t$. In particular, $t < w-1$. We begin by applying \Cref{prop9} to the standard braided presentation of $\mathcal{K}(w,b,t,n)$:
\begin{align*}
    \mathcal{K}(w,b,t,n) &= \delta_b^n \underline{\delta_{w-1}^t}\\
    &= \delta_b^n \underline{\delta_{w-2}^{t-1} (\delta_{w-1}) \gamma_{t-1}} \text{ by \Cref{prop9}} \\
    &= \delta_b^n \delta_{w-2}^{t-1} \underline{\sigma_{w-1} \delta_{w-2}} \gamma_{t-1}.
\end{align*}
Our definition of $n$-bridge braid required that $ w-2 \geq b$. Therefore, there is a single $\sigma_{w-1}$, and we can destabilize the braid:
\begin{align*}
\mathcal{K}(w,b,t,n) &= \delta_b^n \delta_{w-2}^{t-1} \underline{\sigma_{w-1}} \delta_{w-2} \gamma_{t-1} \\
&= \delta_b^n \delta_{w-2}^{t-1} \delta_{w-2} \gamma_{t-1}\\
&= \delta_b^n \delta_{w-2}^{t} \gamma_{t-1}.
\end{align*} 

\noindent We iteratively: (1) apply \Cref{prop9} to the rightmost $\delta_{w - \star}^t$ term, and (2) destabilize the largest remaining Artin generator. Since $w > b \geq t$, we can repeat the above process a total of $w - b-1$ times, after which we have:
\begin{align}
\mathcal{K}(w,b,t,n) &= \delta_b^n \delta_{w-2}^{t} \gamma_{t-1} \nonumber\\
    &= \delta_b ^{n} \delta_b^t \gamma_{t-1}^{w-b-1} \label{eqn:remember}.
\end{align}
This braid word is on $b+1$ strands. We assumed that $n+t < b+1$, so namely, $n+t \leq b$. Suppose $n+t = b$. In this case, 
\begin{align*}
\mathcal{K}(w,b,t,n) &= \delta_b ^{n} \delta_b^t \gamma_{t-1}^{w-b-1} \\
&= \delta_b^b \gamma_{t-1}^{w-b-1}  \\
&= \delta_{b-1} \underline{\delta_b^{b-1}}\sigma_{b-1} \gamma_{t-1}^{w-b+1} \text{\qquad by \Cref{prop:SameSuperSub}} \\
&= \delta_{b-1} \underline{\delta_{b-1} ^{b-2} \delta_b \gamma_{b-2}} \sigma_{b-1} \gamma_{t-1}^{w-b+1} \text{\qquad by \Cref{prop9}} \\
&= \delta_{b-1}\delta_{b-1} ^{b-2} \underline{\delta_b} \gamma_{b-2} \sigma_{b-1} \gamma_{t-1}^{w-b+1} \\
&= \delta_{b-1}\delta_{b-1} ^{b-2} \underline{\sigma_b \delta_{b-1}} \gamma_{b-2} \sigma_{b-1} \gamma_{t-1}^{w-b+1} \\
&= \delta_{b-1}\delta_{b-1} ^{b-1} \gamma_{b-2} \sigma_{b-1} \gamma_{t-1}^{w-b+1} \text{\qquad by destabilizing the unique $\sigma_b$ letter} \\
&= \delta_{b-1} ^{b} \gamma_{b-1} \gamma_{t-1}^{w-b+1}.
\end{align*}

\noindent This braid word on $b$ strands contains a full twist; thus, by \Cref{thm:FMW}, $i(\mathcal{K}(w,b,t,n))= b = n + t$. Now suppose $n+t < b $. In particular, $n+t \leq b-1$. In this case, as in \Cref{eqn:remember},

\begin{align*}
    K(w, b, t, n) &= \delta_b ^{n} \delta_b^t \gamma_{t-1}^{w-b-1}\\
    &=\underline{\delta_b ^{n+t}} \gamma_{t-1}^{w-b-1}\\
    &= \underline{\delta_{b-1}^{n+t-1} \delta_b \gamma_{n + t - 1}} \gamma_{t-1}^{w-b-1} \text{\qquad by \Cref{prop9}} \\
 &= \delta_{b-1}^{n+t-1} \underline{\delta_b} \gamma_{n + t - 1} \gamma_{t-1}^{w-b-1}  \\
  &= \delta_{b-1}^{n+t-1} \underline{\sigma_b \delta_{b-1}} \gamma_{n + t - 1} \gamma_{t-1}^{w-b-1}  \\
&= \delta_{b-1}^{n+t-1} \delta_{b-1} \gamma_{n + t - 1} \gamma_{t-1}^{w-b-1} \text{\qquad by destabilizing the unique $\sigma_b$ letter} \\
&= \delta_{b-1}^{n+t} \gamma_{n + t - 1} \gamma_{t-1}^{w-b-1}. 
\end{align*}

\noindent To simplify the right hand side, we will need to (1) apply \Cref{prop9} to the $\delta_{b - \star}^{n+t}$ term, and then (2) destabilize the largest remaining Artin generator. We will need to repeat this process $(b-1) - (n+t)$ many times. Below, we write out explicitly what happens after applying steps (1) and (2) once, and then suppress the word for the remaining $(b-1)-(n+t)-1$ applications \Cref{prop9} and destabilization. We note: implicitly, we really are using that $n+t \leq b-1$.

\begin{align*}
K(w, b, t, n) &= \delta_{b-1}^{n+t} \gamma_{n + t - 1} \gamma_{t-1}^{w-b-1} \\
&= (\delta_{b-2}^{n+t-1} \delta_{b-1} \gamma_{n+t-1}) \gamma_{n + t - 1} \gamma_{t-1}^{w-b-1} \text{\qquad by \Cref{prop9}}\\
&= \delta_{b-2}^{n+t-1} \sigma_{b-1} \delta_{b-2} \gamma_{n+t-1}^2 \gamma_{t-1}^{w-b-1} \text{\qquad by the definition of $\delta_{b-1}$}\\
&= \delta_{b-2}^{n+t-1} \delta_{b-2} \gamma_{n+t-1}^2 \gamma_{t-1}^{w-b-1} \text{\qquad by destabilizing the $\sigma_{b-1}$ term}\\
&= \delta_{b-2}^{n+t} \ \gamma_{n+t-1}^2 \ \gamma_{t-1}^{w-b-1} \\
&= \underline{\delta_{n+t} ^{n+t}} \ \gamma_{n+t-1}^{b-1-(n+t)+1} \ \gamma_{t-1}^{w-b+1}\text{\qquad after repeating this process another $(b-1)-(n+t)-1$ times}\\
&= \underline{\delta_{n+t-1} \delta_{n+t}^{n+t-1} \sigma_{n+t-1}} \ \gamma_{n+t-1}^{b-n-t} \ \gamma_{t-1}^{w-b+1} \text{\qquad by \Cref{prop:SameSuperSub}} \\
&= \delta_{n+t-1} \underline{\delta_{n+t}^{n+t-1}} \sigma_{n+t-1} \ \gamma_{n+t-1}^{b-n-t} \ \gamma_{t-1}^{w-b+1} \\
&= \delta_{n+t-1} \underline{\delta_{n+t-1}^{n+t-2} \delta_{n+t} \gamma_{n+t-2}} \sigma_{n+t-1} \ \gamma_{n+t-1}^{b-n-t} \ \gamma_{t-1}^{w-b+1} \text{\qquad by \Cref{prop9}} \\
&= \delta_{n+t-1} \delta_{n+t-1}^{n+t-2} \underline{\delta_{n+t}} \gamma_{n+t-2} \sigma_{n+t-1} \ \gamma_{n+t-1}^{b-n-t} \ \gamma_{t-1}^{w-b+1} \\
&= \delta_{n+t-1} \delta_{n+t-1}^{n+t-2} \underline{\sigma_{n+t}\delta_{n+t-1}} \gamma_{n+t-2} \sigma_{n+t-1} \ \gamma_{n+t-1}^{b-n-t} \ \gamma_{t-1}^{w-b+1} \\
&= \delta_{n+t-1} \delta_{n+t-1}^{n+t-2} \underline{\delta_{n+t-1}} \gamma_{n+t-2} \sigma_{n+t-1} \ \gamma_{n+t-1}^{b-n-t} \ \gamma_{t-1}^{w-b+1} \text{\qquad by destablizing the $\sigma_{n+t}$ term} \\
&= \delta_{n+t-1}^{n+t} \underline{\gamma_{n+t-2} \sigma_{n+t-1}} \ \gamma_{n+t-1}^{b-n-t} \ \gamma_{t-1}^{w-b+1} \\
&= \delta_{n+t-1}^{n+t} \gamma_{n+t-1} \ \gamma_{n+t-1}^{b-n-t} \ \gamma_{t-1}^{w-b+1} \\
&= \delta_{n+t-1}^{n+t} \gamma_{n+t-1}^{b-n-t+1} \gamma_{t-1}^{w-b+1}.
\end{align*} 

This braid word in $B_{n+t}$ contains a full twist; applying \Cref{thm:FMW}, we deduce the braid index is $n + t$. \end{proof}

\section{Future Directions}
Our proof of \Cref{thm:NBridge} is self-contained and effective: we started with the definition of an $n$-bridge braid, and we produced a Markov equivalent positive braid containing a full twist. The algorithm we produce could be extended to all T-links. 
However, the computations are significantly more tedious, so we do not include them here. An interesting future direction would be to write a computer implementation of our algorithm for all T-links.

\bibliographystyle{amsalpha2}
\bibliography{masterbiblio}

\newcommand{\etalchar}[1]{$^{#1}$}
\providecommand{\bysame}{\leavevmode\hbox to3em{\hrulefill}\thinspace}
\providecommand{\MR}{\relax\ifhmode\unskip\space\fi MR }
\providecommand{\MRhref}[2]{%
  \href{http://www.ams.org/mathscinet-getitem?mr=#1}{#2}
}
\providecommand{\href}[2]{#2}
\begin{thebibliography}{CFK{\etalchar{+}}11}

\bibitem[Ale23]{Alexander}
J.W. Alexander, \emph{A lemma on a system of knotted curves}, Proc. Nat. Acad.
  Sci \textbf{9} (1923), 93--95.

\bibitem[Ber91]{Berge:SolidTorus}
John Berge, \emph{The knots in {$D^2\times S^1$} which have nontrivial {D}ehn
  surgeries that yield {$D^2\times S^1$}}, Topology Appl. \textbf{38} (1991),
  no.~1, 1--19.

\bibitem[Ber18]{Berge}
John Berge, \emph{Some knots with surgeries yielding lens spaces},
  https://arxiv.org/abs/1802.09722 (2018).

\bibitem[BK09]{BirmanKofman}
Joan Birman and Ilya Kofman, \emph{A new twist on {L}orenz links}, J. Topol.
  \textbf{2} (2009), no.~2, 227--248.

\bibitem[Bir13]{Birman:Lorenz}
Joan~S. Birman, \emph{The mathematics of {L}orenz knots}, Topology and dynamics
  of chaos, World Sci. Ser. Nonlinear Sci. Ser. A Monogr. Treatises, vol.~84,
  World Sci. Publ., Hackensack, NJ, 2013, pp.~127--148.

\bibitem[BB05]{BirmanBrendle}
Joan~S. Birman and Tara~E. Brendle, \emph{Braids: a survey}, Handbook of knot
  theory, Elsevier B. V., Amsterdam, 2005, pp.~19--103.

\bibitem[BW83]{BirmanWilliams}
Joan~S. Birman and R.~F. Williams, \emph{Knotted periodic orbits in dynamical
  systems. {I}. {L}orenz's equations}, Topology \textbf{22} (1983), no.~1,
  47--82.

\bibitem[CFK{\etalchar{+}}11]{ChampanerkarFuterKofmanNeumannPurcell}
Abhijit Champanerkar, David Futer, Ilya Kofman, Walter Neumann, and Jessica~S.
  Purcell, \emph{Volume bounds for generalized twisted torus links}, Math. Res.
  Lett. \textbf{18} (2011), no.~6, 1097--1120.

\bibitem[dP22]{DePaiva}
Thiago de~Paiva, \emph{Unexpected essential surfaces among exteriors of twisted
  torus knots}, Algebr. Geom. Topol. \textbf{22} (2022), no.~8, 3965--3982.

\bibitem[dPP22]{dePaivaPurcell}
Thiago de~Paiva and Jessica~S Purcell, \emph{{Satellites and Lorenz knots}},
  International Mathematics Research Notices (2022).

\bibitem[Deh15]{Dehornoy:Lorenz}
Pierre Dehornoy, \emph{On the zeroes of the {A}lexander polynomial of a
  {L}orenz knot}, Ann. Inst. Fourier (Grenoble) \textbf{65} (2015), no.~2,
  509--548.

\bibitem[FS80]{FintushelStern}
Ronald Fintushel and Ronald~J. Stern, \emph{Constructing lens spaces by surgery
  on knots}, Math. Z. \textbf{175} (1980), no.~1, 33--51.

\bibitem[FW87]{FranksWilliams}
John Franks and R.~F. Williams, \emph{Braids and the {J}ones polynomial},
  Trans. Amer. Math. Soc. \textbf{303} (1987), no.~1, 97--108.

\bibitem[Gab89]{Gabai:SolidTori}
David Gabai, \emph{Surgery on knots in solid tori}, Topology \textbf{28}
  (1989), no.~1, 1--6.

\bibitem[Gab90]{Gabai:1BridgeBraids}
\bysame, \emph{{$1$}-bridge braids in solid tori}, Topology Appl. \textbf{37}
  (1990), no.~3, 221--235.

\bibitem[Gar69]{Garside}
F.~A. Garside, \emph{The braid group and other groups}, Quart. J. Math. Oxford
  Ser. (2) \textbf{20} (1969), 235--254.

\bibitem[GM11]{GonzalesMeneses:BraidGroups}
Juan Gonz\'{a}lez-Meneses, \emph{Basic results on braid groups}, Ann. Math.
  Blaise Pascal \textbf{18} (2011), no.~1, 15--59.

\bibitem[GLV18]{GLV:11LSpace}
Joshua~Evan Greene, Sam Lewallen, and Faramarz Vafaee, \emph{{$(1,1)$}
  {L}-space knots}, Compos. Math. \textbf{154} (2018), no.~5, 918--933.

\bibitem[KM22]{KrishnaMorton}
Siddhi Krishna and Hugh Morton, \emph{Twist positivity, lorenz knots, and
  concordance}, Submitted., 2022.

\bibitem[LV19]{LeeVafaee}
Cristine Lee and Faramarz Vafaee, \emph{On 3-braids and {L}-space knots},
  https://arxiv.org/abs/1911.01289 (2019).

\bibitem[Lic62]{Lickorish:DehnSurgery}
W.~B.~R. Lickorish, \emph{A representation of orientable combinatorial
  {$3$}-manifolds}, Ann. of Math. (2) \textbf{76} (1962), 531--540.

\bibitem[Mar35]{Markov}
A.A. Markov, \emph{Uber die freie aquivalenz geschlossener zopfe}, Recueil
  Mathematique Moscou, \textbf{1} (1935), 73--78.

\bibitem[Mor86]{Morton:KnotPolys}
H.~R. Morton, \emph{Seifert circles and knot polynomials}, Math. Proc.
  Cambridge Philos. Soc. \textbf{99} (1986), no.~1, 107--109.

\bibitem[Mor88]{Morton:PolysFromBraids}
\bysame, \emph{Polynomials from braids}, Braids ({S}anta {C}ruz, {CA}, 1986),
  Contemp. Math., vol.~78, Amer. Math. Soc., Providence, RI, 1988,
  pp.~575--585.

\bibitem[Mos71]{Moser:TorusKnots}
Louise Moser, \emph{Elementary surgery along a torus knot}, Pacific J. Math.
  \textbf{38} (1971), 737--745.

\bibitem[Nie20]{Nie:1BBsSatellites}
Zipei Nie, \emph{On $1$-bridge braids, satellite knots, the manifold $v2503$
  and non-left-orderable surgeries and fillings}.

\bibitem[OS05]{OSz:LensSpaceSurgeries}
Peter Ozsv\'ath and Zolt\'an Szab\'o, \emph{On knot {F}loer homology and lens
  space surgeries}, Topology \textbf{44} (2005), no.~6, 1281--1300.

\bibitem[Vaf15]{Vafaee:TwistedTorusKnots}
Faramarz Vafaee, \emph{On the knot {F}loer homology of twisted torus knots},
  Int. Math. Res. Not. IMRN (2015), no.~15, 6516--6537.

\bibitem[Wal60]{Wallace:DehnSurgery}
Andrew~H. Wallace, \emph{Modifications and cobounding manifolds}, Canadian J.
  Math. \textbf{12} (1960), 503--528.

\end{thebibliography}

\vspace{1cm}

\end{document}